\author{Nicholas O. Kirby \and Eliot Fried}
\title{Gamma-limit of a model for the elastic energy of an inextensible ribbon}
\newtheorem{theorem}{Theorem}[section]
\newtheorem{lemma}[theorem]{Lemma}
\newtheorem{corollary}[theorem]{Corollary}
\newtheorem{conjecture}[theorem]{Conjecture}
\newenvironment{definition}[1][Definition]{\begin{trivlist}
\item[\hskip \labelsep {\bfseries #1}]}{\end{trivlist}}
\DeclareMathOperator*{\esssup}{esssup}
\DeclareMathOperator*{\lsc}{sc}
\DeclareMathOperator*{\Glim}{\Gamma-\lim}
\newcommand{\eps}{\varepsilon}
\newcommand{\dA}{\,\mbox{d}A}
\newcommand{\ds}{\,\mbox{d}s}
\newcommand{\RR}{\mathbb{R}}
\newcommand{\NN}{\mathbb{N}}
\newcommand{\bbS}{\mathbb{S}}
\newcommand{\km}{\k_m}
\newcommand{\Xkp}{X^{\km,p}}
\newcommand{\Xikp}{\Xi^{\km,p}}
\newcommand{\Xp}{X^{p}}
\newcommand{\Yk}{Y^{\km}}
\newcommand{\tcF}{\tilde{\cF}}
\newcommand{\wto}{\rightharpoonup}
\newcommand{\ouint}{I}
\newcommand{\cluint}{[0,1]}
\newcommand{\parouint}{(I)}
\newcommand{\cA}{\mathcal{A}}
\newcommand{\cB}{\mathcal{B}}
\newcommand{\cC}{\mathcal{C}}
\newcommand{\cF}{\mathcal{F}}
\newcommand{\cG}{\mathcal{G}}
\newcommand{\cJ}{\mathcal{J}}
\newcommand{\cS}{\mathcal{S}}
\renewcommand{\k}{\kappa}
\newcommand{\bfA}{\mathbf{A}}
\newcommand{\bfa}{\mathbf{a}}
\newcommand{\bfB}{\mathbf{B}}
\newcommand{\bfb}{\mathbf{b}}
\newcommand{\bfc}{\mathbf{c}}
\newcommand{\bfr}{\mathbf{r}}
\newcommand{\bft}{\mathbf{t}}
\newcommand{\bfn}{\mathbf{n}}
\newcommand{\bfw}{\mathbf{w}}
\newcommand{\bfx}{\mathbf{x}}
\newcommand{\bfy}{\mathbf{y}}
\newcommand{\bfz}{\mathbf{z}}
\newcommand{\bfu}{\mathbf{u}}
\newcommand{\pa}{\xi}
\newcommand{\qa}{\zeta}
\newcommand{\bfinf}{\boldsymbol{\infty}}
\newcommand{\dt}{\,{\rm d} t}
\newcommand{\Dp}{\,{\rm d} \pa}
\newcommand{\Dq}{\,{\rm d} \qa}
\begin{document}

\begin{abstract}
A $\Gamma$-convergence result involving the elastic energy of a narrow inextensible ribbon is established. A non-dimensional form of the elastic energy is reduced to a one-dimensional integral over the centerline of the ribbon with the aspect ratio of the ribbon being a small parameter. That integral is observed to increase monotonically with the aspect ratio.  The $\Gamma$-limit of the family of non-dimensional elastic energies is taken in a Sobolev space of centerlines with non-vanishing curvature.  In that space, it is shown that the $\Gamma$-limit is a functional first proposed by Sadowsky in the context of narrow ribbons that form M\"{o}bius bands.  The results obtained here do not apply to such ribbons, since the centerline of a M\"{o}bius band must have at least one inflection point.
As a first step toward dealing with such inflection points, a result is presented on the lower semicontinuity of the Sadowsky functional with inflection points comprising a set of measure zero within the domain of an arclength parameterization.  

\smallskip
\noindent \textbf{Keywords.} low-dimensional media $\cdot$ dimensional reduction
 $\cdot$ curvature elasticity $\cdot$ Sadowsky functional $\cdot$ torsion $\cdot$ sequential lower semicontinuity $\cdot$ weak convergence

\smallskip
\noindent \textbf{AMS classification.}  49Q10 $\cdot$ 49S05 $\cdot$ 82B21

\end{abstract}
\maketitle

\section{Introduction}

An inextensible ribbon is modeled as a two-dimensional surface that is geometrically constrained to be isometric to a rectangle of given length $\ell$ and width $2w$.
The dimensionless parameter $\eps=2w/\ell $ is referred to as the aspect ratio of the ribbon.
Granted that the curvature $\tilde\kappa$ of the centerline $\cC$ of the ribbon is nonvanishing, the geometric constraint yields a parametrization of the ribbon in terms of $\cC$.

To determine the equilibrium shape of an elastic, inextensible ribbon subject to imposed end conditions, it suffices to minimize its net potential energy.  
Here, it is assumed that the elastic energy density $\phi$ of the ribbon is an isotropic, quadratic function of the Weingarten map, and thereby a symmetric, quadratic function of the principle curvatures of the ribbon.
Upon completing the square, $\phi$ admits a representation in terms of the mean and Gaussian curvatures $H$ and $K$ of the ribbon of the form
\begin{equation}\label{genelast}
\phi=\frac{D}{2} (H-H_0)^2+CK,
\end{equation}
where $D$ and $C$ are constant moduli and $H_0$ is the spontaneous mean curvature.
The expression \eqref{genelast} was proposed by Germain \cite{Germain1821}.
The particular version of \eqref{genelast} considered here, in which $H_0$ is taken to be zero, was considered by Poisson \cite{Poisson1812}.

In the limit $\varepsilon\to0$ of vanishing aspect ratio, Sadowsky \cite{Sadowsky1930} argued that the energy of a ribbon forming a M\"{o}bius band should be proportional to 
\begin{equation} \label{Sadowskyfun}
\cF=\int_{\cC} \tilde{\k}^2\left(1+\eta^2\right)^2\Dp,
\end{equation}
where $\eta$ is the ratio $\tilde{\tau}/\tilde{\kappa}$, with $\tilde{\tau}$ the torsion of the centerline $\cC$, and $\pa$ denotes arclength along $\cC$.
The properties of the functional \eqref{Sadowskyfun} were studied in some detail by Wunderlich \cite{Wunderlich1962}. 
Recently, Starostin and van der Heijden \cite{SvdH2007} used the variational bicomplex formalism to investigate the equilibrium equations for the problem associated with minimizing the functional
\begin{equation}\label{Starostinfun}
\cF_{\eps} = \int_{\cC} \tilde{\k}^2 \left(1+\eta^2\right)^2 \frac{1}{\eps\ell \dot{\eta}} \ln \left(\frac{2+\eps\ell \dot{\eta}}{2-\eps\ell\dot{\eta}}\right) \Dp, 
\end{equation}
for $\eps>0$, where a superposed dot indicates differentiation with respect to the arclength parameter $\pa$.
Upon inspection, it is evident that the Sadowsky functional $\cF$ is the pointwise limit of the elastic energy $\cF_{\eps}$ as $\eps \to 0$.
However, the question of whether the Sadowsky functional \eqref{Sadowskyfun} is the proper variational limit (that is, the $\Gamma$-limit) of the elastic energy 
\eqref{Starostinfun} of a ribbon with a given centerline remains unanswered. 
This question is settled herein for curves with nonvanishing curvature that are parametrized by arclength and are elements of certain Sobolev spaces.

As Randrup and R{\o}gen \cite{Randrup1996} remark, the centerline of a nonorientable developable, like a M\"{o}bius band, must have at least one point at which the curvature vanishes.
The problem of establishing the Sadowsky functional as the $\Gamma$-limit for a space of centerlines containing those corresponding to a M\"{o}bius band is left for future work. 
However, a result in this direction is provided.

The first steps in our analysis are identical to those appearing in the papers of Wunderlich \cite{Wunderlich1962} and Starostin and van der Heijden \cite{SvdH2007} and also in the thesis of Yong \cite{Yongthesis}.
These steps deliver an expression for the elastic energy of the ribbon in terms of the shape of its centerline and depending parametrically on the aspect ratio $\eps$ of the ribbon.
In particular, the energy is given by 
\begin{equation}
E=\frac{\eps \ell D}{2} \int_{0}^{\ell} \tilde{\k}^2(\pa) (1+\eta^2(\pa))^2 g\left(\eps\ell \dot{\eta}(\pa)\right) \Dp,
\end{equation}
where $g\colon \RR \to [1,+\infty]$ is defined by
\begin{equation}
g(x) = \begin{cases} 
1, & \text{ if } x=0,\\
\dfrac{1}{x} \ln \left(\dfrac{2+x}{2-x}\right), & \text{ if } |x|<2 \text{ and } x\neq 0,\\
+\infty, & \text{ if } |x|\geq 2.
\end{cases}
\end{equation}
Thereafter, it is shown that, upon taking $\eps$ to zero, the Sadowsky functional 
is not merely the ``pointwise'' limit of the one-parameter family $2E/\eps D$ of functionals 
but is also the $\Gamma$-limit in a naturally chosen space of curves with nonvanishing curvature.
The keys to the argument are the monotonic dependence of the family of energy functionals on $\eps$ and straightfoward applications of the H\"{o}lder inequality.

\section{The energy of an elastic band}
Consider a surface isometric to a rectangle with base $\ell$ and height $2w$. Let
\begin{equation}
\varepsilon=\frac{2w}{\ell}.
\label{eps}
\end{equation} By Gauss's Theorema Egregium, this surface must have Gaussian curvature equal to zero; that is, the surface must be developable.
Struik \cite{Struik1961} proves that any developable surface in $\RR^3$ is a ruled surface.
Given a space curve with length $\ell$, arclength parametrization $\bfr$, Frenet frame $\{\bft,\bfn,\bfb\}$, 
curvature $\tilde{\k}$ with isolated zeroes, and torsion $\tilde{\tau}$, define $\eta$ as the ratio
\begin{equation}\label{etadefa}
\eta=\frac{\tilde{\tau}}{\tilde{\k}}.
\end{equation}
Graustein \cite[\S 52] {Graustein1935} shows that a curve on a developable surface is a geodesic if and only if the surface is the rectifying developable of the curve. 
Graustein \cite[\S 28]{Graustein1935} also shows that the unique rectifying developable of a space curve with centerline having arclength parameterization $\bfr$ and width $2w$ is given by 
\begin{equation}
\bfx (\pa,\qa)=\bfr(\pa)+\qa [ \bfb (\pa) +\eta(\pa) \bft (\pa) ], \quad (\pa,\qa) \in [0,\ell] \times [-w,w].
\end{equation}

Given the centerline $\bfr$, consider the energy of an elastic ribbon $\cS$ of width $2w$ parameterized by $\bfx$.
Wunderlich \cite{Wunderlich1962} shows that the nonvanishing principal curvature $\k_1$ of $\cS$ is given by
\begin{equation}\label{princcurve}
\k_1 = \frac{\tilde{\k} (1+\eta^2)}{|1+\qa\dot{\eta}|},
\end{equation}
where the dot denotes differentiation with respect to arclength.
The bending energy $E$ of $\cS$ therefore takes the form
\begin{equation}\label{explenerg}
E=\frac{D}{2}\int_{\cS} \k_1^2 \dA = \frac{D}{2} \int_{0}^{\ell}\int_{-w}^{w} \frac{\tilde{\k}^2(\pa) (1+\eta^2(\pa))^2}{|1+\qa\dot{\eta}(\pa)|} \Dq\Dp,
\end{equation}
where $D$ is a measure of flexural rigidity.
Evaluating the integral on the far right-hand side of \eqref{explenerg} over the width $2w$ of the ribbon yields
\begin{equation}
E=Dw \int_{0}^{\ell} \tilde{\k}^2(\pa) (1+\eta^2(\pa))^2 g(2w\dot{\eta}(\pa)) \Dp,
\end{equation}
where $g\colon \RR \to [1,+\infty]$ is defined by
\begin{equation}
g(x) = \begin{cases} 
1, & \text{ if } x=0,\\
\dfrac{1}{x} \ln \left(\dfrac{2+x}{2-x}\right), & \text{ if } |x|<2 \text{ and } x\neq 0,\\
+\infty, & \text{ if } |x|\geq 2.
\end{cases}
\end{equation}

\subsection{Nondimensionalization}
To nondimensionalize the problem, introduce a unit speed parametrization $\bfu \colon \cluint \to \RR^3$ of the centerline defined such that $\bfu(s)=~\bfr \left(\ell s\right)/\ell $ for each $s\in \cluint$. Let $\ouint$ denote the open interval $(0,1)$.
For a measurable set $E\subset \cluint$, consider the family of functionals $\cF_{\eps} (\, \cdot\,,E) \colon W^{3,p}(\ouint;\RR^3) \to [0,+\infty]$  defined by
\begin{equation}\label{familycF}
\cF_{\eps} (\bfu, E) = \begin{cases} \displaystyle{\int_{E}  \k^2 \left(1+\eta^2\right)^2 g(\eps \eta') \ds,} & \text{ if } \bfu\in W^{3,p}(\ouint;\RR^3) \text{ such that } \eta \in W^{1,1}\parouint,\\[10pt]
 +\infty, & \text{otherwise}.
\end{cases}
\end{equation}
In \eqref{familycF} and hereafter, a prime indicates differentiation with respect to $s=\pa/\ell $ and the symbols $\k$ and $\tau$ denote the dimensionless counterparts of the curvature $\tilde{\k}$ and torsion $\tilde{\tau}$, given in terms of $\bfu$ by
\begin{equation}
\k= |\bfu''| \qquad\text{and} \qquad \tau=-\frac{\bfu''}{|\bfu''|} \cdot \left[ \frac{\bfu'\times \bfu''}{|\bfu''|}\right]'= \frac{\bfu'\cdot(\bfu''\times \bfu''')}{|\bfu''|^2}.
\end{equation}
By the chain rule, $\k=\ell  \tilde{\k}$ and $\tau=\ell \tilde{\tau}$, whereby $\eta$ as defined in \eqref{etadefa} admits the alternative representation $\eta=\tau/\k$.
It then follows that $\eta$ and $\eta'$ may be expressed in terms of $\bfu$ by
\begin{align}\label{etadef}
	\begin{split}
		\eta={} & \frac{\tau}{\k} = \frac{\bfu' \cdot (\bfu'' \times \bfu''')}{|\bfu''|^3},\\
		\eta'={} & \left [\frac{\bfu' \cdot (\bfu'' \times \bfu''')}{|\bfu''|^3}\right ]'= \frac{\bfu' \cdot (\bfu''\times \bfu'''')}{|\bfu''|^3}-\frac{3 (\bfu''\cdot \bfu''') [\bfu'\cdot (\bfu''\times \bfu''')]}{|\bfu''|^5}.
	\end{split}
\end{align}

\section{Notation}
Since the interval $\ouint=(0,1)$ remains fixed herein, the notation $\| \cdot \|_{p}$ is used to indicate the norms on either of the Lebesgue spaces $L^{p} (\ouint; \RR^3)$ or $L^{p}(\ouint;\RR)$; specifically,
\begin{align}
\begin{split}
\|\bfu\|_{p\phantom{\infty}} ={} &  \left( \int_{\ouint} |\bfu|^p \dt \right)^{1/p} \quad \text{ if } p \text{ is finite,}\\
\|\bfu\|_{\infty\phantom{p}} = {} & \esssup\limits_{\ouint} \{ |\bfu|\}.		
\end{split}
\end{align}

For a function $\bfu: I\to \RR^3$ with $k\ge 1$ continuous weak derivatives $\bfu',\bfu'', \bfu^{3}, \ldots , \bfu^{k}$ and for $t\in I$ let $D^{[k]}\bfu(t)\in (\RR^3)^k$ be defined by
\begin{equation}
D^{[k]}\bfu(t)= (\bfu(t),\bfu'(t),\ldots, \bfu^k(t)).
\end{equation}
Choose $\bfA,\bfB \in \RR^3\times \bbS^2$ (so that, for instance, $\bfA=(\bfa,\bft_0)$ for some $\bfa,\bft_0\in \RR^3$ such that $|\bft_0|=1$). 
Define function spaces $Y$, $\Xp$, $\Yk$, and $\Xkp$ by
\begin{align}\label{fnspace}
  \begin{split}
	Y   := {} & \{ \bfu \in C^{\infty} (\cluint; \RR^3) : D^{[1]}\bfu(0)=\bfA, D^{[1]}\bfu(1)=\bfB, |\bfu'(s)|= 1 \text{ for all } s\in \ouint\},\\
	\Xp  := {} & \text{cl}_{W^{3,p}(\ouint;\RR^3)}(Y) ,\\
	\Yk  := {} & \{ \bfu \in Y : |\bfu''(s)|\geq \k_m \text{ for all } s\in \ouint\},\\
	\Xkp := {} & \text{cl}_{W^{3,p}(\ouint;\RR^3)} (\Yk),
	\end{split}
\end{align}
where for a Banach space $\cB$ and $\cA\subset \cB$ a subset, $\text{cl}_{\cB}(\cA)$ denotes the closure of $\cA$ with respect to the norm of $\cB$.

\section{$\Gamma$-limit}
The goal of the ensuing analysis is to determine a value of $p$ such that the sequence $\{\cF_{\eps}(\cdot, I)\}$ of functionals defined in \eqref{familycF} has $\Gamma$-limit 
\begin{equation}\label{gamlim}
\cF (\bfu,\ouint) = \int_{\ouint} \k^2 (1+\eta^2)^2\ds,
\end{equation}
with respect to weak convergence in $\Xkp$ and with respect to strong convergence in $W^{3,p}(\ouint; \RR^3)$.
The functional $\cF(\cdot,\ouint)$ defined by \eqref{gamlim} is called the Sadowsky functional.
\subsection{Existence of $\Gamma$-limit}
In particular, following Braides \cite{Braides}, given any sequence $\{\eps_j\}$ with $\eps_j>0$ and $\eps_j \to 0$ and any element $\bfu \in X$:
\begin{enumerate}
\item for every sequence $\{\bfu_j\}$ with $\bfu_j\in X$ such that $\bfu_j\to \bfu$ in $X$, $\cF(\bfu,\ouint)$ is bounded above in accord with
\begin{equation}\label{liminfcond}
\cF (\bfu,I) \leq \liminf_{j\to \infty} \cF_{\eps_j} (\bfu_j,I);
\end{equation}
\item there exists a sequence $\{\bfu_j\}$ converging to $\bfu$ such that $\cF(\bfu,\ouint)$ is bounded below in accord with
\begin{equation}\label{limsupcond}
\cF(\bfu,I) \geq \limsup_{j\to \infty} \cF_{\eps_j} (\bfu_j,I).
\end{equation}
\end{enumerate}
For any such sequence $\{\eps_j\}$, it is possible to extract a decreasing subsequence $\{\eps_{j_k}\}$. 
Since the integrand of $\cF_{\eps}(\cdot , \ouint)$ increases with $\eps$ (regardless of the sign of $\eta'$), it can be deduced that (see Remark 1.40 of Braides \cite{Braides}) the $\Gamma$-limit of the sequence of functionals $\cF_{\eps_{j_k}}(\cdot , \ouint)$ exists and is given by
\begin{equation}\label{glima}
\Glim_{k \to \infty} \cF_{\eps_{j_k}}(\cdot , \ouint)=\lsc \left(\inf_{k\in \NN} \cF_{\eps_{j_k}}(\cdot , \ouint)\right)=\lsc \left(\inf_{\eps>0} \cF_{\eps}(\cdot , \ouint)\right),
\end{equation}
where $\lsc(F)$ is the lower semicontinuous envelope of $F$; that is, for any $\bfu\in X$,
\begin{equation}
\lsc(F)(\bfu)=\sup \{ G (\bfu) : G \text{ is lower semicontinuous}, G\leq F\}.
\end{equation}

As a first step toward establishing the Sadowsky functional as the $\Gamma$-limit of the sequence $\{\cF_{\eps}(\cdot,\ouint)\}$, it is useful to compute the functional 
$\tcF$ defined for each $\bfu \in \Xp$ by
\begin{equation}\label{tcFdef}
\tcF(\bfu)=\inf_{\eps>0} \cF_{\eps}(\bfu,\ouint).
\end{equation}
\begin{lemma}\label{tcFlemma}
Let $X=\Xp$ or $\Xkp$. Given $\bfu\in X$, $\tcF$ defined in accord with \eqref{tcFdef} is given by
\begin{equation}\label{tcFdefb}
\tcF (\bfu)= \begin{cases}
	\cF(\bfu,\ouint), & \text{\rm if } \bfu \in X\cap \{\bfu : \eta' \in L^{\infty}(\ouint)\}, \\[10pt]
	+\infty, & \text{\rm otherwise.} %
 \end{cases}
\end{equation}
\end{lemma}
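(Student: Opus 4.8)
The plan is to treat the two cases in \eqref{tcFdefb} separately, after recording the elementary properties of $g$. First I would note that $g$ is even, that $g\ge 1$ with equality only at $0$, that $g$ is finite, continuous and nondecreasing on $[0,2)$ with $g(0)=1$ and $g(x)\to+\infty$ as $x\to 2^-$, and that $g\equiv+\infty$ on $\{|x|\ge 2\}$; in particular, for each fixed $x\in\RR$ the map $\eps\mapsto g(\eps x)=g(\eps|x|)$ is nondecreasing on $[0,\infty)$, which is precisely the monotonicity of the integrand of $\cF_\eps$ in $\eps$ already noted above. It follows that $\eps\mapsto\cF_\eps(\bfu,\ouint)$ is nondecreasing for each $\bfu\in X$, so throughout one may replace $\tcF(\bfu)=\inf_{\eps>0}\cF_\eps(\bfu,\ouint)$ by $\lim_{\eps\to 0^+}\cF_\eps(\bfu,\ouint)$.

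Next I would dispose of the case $\bfu\in X$ with $\eta'\in L^\infty(\ouint)$. Since $X\subset W^{3,p}(\ouint;\RR^3)$ by \eqref{fnspace}, and since $\eta$, having a weak derivative in $L^\infty(\ouint)$, admits a bounded absolutely continuous representative, one has $\bfu\in W^{3,p}(\ouint;\RR^3)$ and $\eta\in W^{1,1}(\ouint)$; the embeddings $W^{1,p}(\ouint)\hookrightarrow C(\cluint)$ and $W^{1,1}(\ouint)\hookrightarrow C(\cluint)$ then give that $\k=|\bfu''|$ and $\eta$ are bounded, so $\k^2(1+\eta^2)^2\in L^\infty(\ouint)$ and $\cF(\bfu,\ouint)<+\infty$. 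Choosing $\eps_0>0$ with $\eps_0\|\eta'\|_\infty<2$ makes $\cF_{\eps_0}(\bfu,\ouint)\le g(\eps_0\|\eta'\|_\infty)\,\|\k\|_\infty^2(1+\|\eta\|_\infty^2)^2<+\infty$. For $\eps\in(0,\eps_0]$ the integrand $\k^2(1+\eta^2)^2 g(\eps\eta'(\cdot))$ decreases pointwise, as $\eps\downarrow 0$, to $\k^2(1+\eta^2)^2$, since $g(\eps\eta'(s))\downarrow g(0)=1$ for a.e.\ $s$ by the monotonicity of $g$ and its continuity at $0$. As the term at $\eps_0$ is integrable, the monotone convergence theorem yields $\lim_{\eps\to0^+}\cF_\eps(\bfu,\ouint)=\int_{\ouint}\k^2(1+\eta^2)^2\ds=\cF(\bfu,\ouint)$, that is, $\tcF(\bfu)=\cF(\bfu,\ouint)$.

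It then remains to show $\tcF(\bfu)=+\infty$ when $\bfu\in X$ but $\eta'\notin L^\infty(\ouint)$. If $\eta\notin W^{1,1}(\ouint)$, then $\cF_\eps(\bfu,\ouint)=+\infty$ for every $\eps>0$ directly from \eqref{familycF}. Otherwise $\eta\in W^{1,1}(\ouint)$ and $\|\eta'\|_\infty=+\infty$, and the key observation is that then, for each individual $\eps>0$, the set $A_\eps:=\{s\in\ouint:|\eta'(s)|\ge 2/\eps\}$ has positive Lebesgue measure, on which $|\eps\eta'|\ge 2$ and hence $g(\eps\eta')=+\infty$. Because $\k>0$ a.e.\ --- for $X=\Xkp$ one even has $\k=|\bfu''|\ge\km$, which passes to the closure since $\bfu_j''\to\bfu''$ uniformly on $\cluint$ along any approximating sequence in $\Yk$; for $X=\Xp$ both integrands vanish on $\{\k=0\}$ under the convention $0\cdot(+\infty)=0$, so one may restrict to $\{\k>0\}$ --- and because $(1+\eta^2)^2\ge 1$, the integrand of $\cF_\eps(\bfu,\ouint)$ is $+\infty$ on a set of positive measure, whence $\cF_\eps(\bfu,\ouint)=+\infty$. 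Since $\eps>0$ was arbitrary, $\tcF(\bfu)=+\infty$.

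I expect no serious obstacle here; this is a warm-up lemma. The two points needing a little care are that membership in $W^{3,p}(\ouint;\RR^3)$ together with $\eta\in W^{1,1}(\ouint)$ already forces the prefactor $\k^2(1+\eta^2)^2$ to be bounded (so the monotone convergence argument needs no extra hypothesis on $\bfu$), and the observation that in the divergent case the unboundedness of $\eta'$ makes $\cF_\eps(\bfu,\ouint)=+\infty$ already for each fixed $\eps>0$, not merely in the limit $\eps\to0^+$.
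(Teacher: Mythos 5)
Your proof is correct and follows essentially the same route as the paper's: monotonicity of $g(\eps\,\cdot\,)$ in $\eps$ plus the monotone convergence theorem for the case $\eta'\in L^{\infty}(\ouint)$, and the observation that $g(\eps\eta')=+\infty$ on a set of positive measure for every fixed $\eps>0$ when $\|\eta'\|_{\infty}=+\infty$. Your two added remarks --- that $\eta'\in L^{\infty}(\ouint)$ already forces $\cF(\bfu,\ouint)<+\infty$ (so the finite/infinite dichotomy on $\cF$ used in the paper's proof is unnecessary), and that one should note $\k>0$ a.e.\ on the bad set to exclude a $0\cdot\infty$ degeneracy --- are slightly more careful than the paper's treatment but do not change the argument.
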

\begin{proof}
By the Monotone Convergence Theorem (see, for instance, Wheeden and Zygmund \cite{wheeden1977}), if $\cF_{\eps}(\bfu,\ouint)$ is finite for some $\eps>0$, then the limits involved in \eqref{tcFdef} may be exchanged to give
\begin{align}
\begin{split}
\inf_{\eps>0} \cF_{\eps}(\bfu, \ouint)={} & \lim_{\eps \searrow 0}  \int_{\ouint} \k^2 (1+\eta^2)^2 g(\eps \eta') \ds \\
 ={} & \int_{\ouint} \lim_{\eps \searrow 0} \k^2 (1+\eta^2)^2 g(\eps \eta') \ds \\ 
 ={} & \cF(\bfu).
\end{split}
\end{align}
Notice that $\tcF(\bfu,\ouint)\neq \cF(\bfu,\ouint)$ if and only if $\cF_{\eps}(\bfu,\ouint)=+\infty$ for all $\eps>0$ and $\cF(\bfu,\ouint)<+\infty$. 

Suppose that $\cF(\bfu,\ouint)<+\infty$.
Let $A=\{s\in \ouint : \eta'(s)=0\}$, $B_{\eps} = \{s \in \ouint : 0<|\eta'(s)|<2/\eps\}$, and $C_{\eps}=\ouint\setminus (A\cup B_{\eps})=\{s \in~\ouint : |\eta'(s)|\geq 2/\eps\}$.
Then
\begin{equation}
\cF_{\eps}(\bfu, \ouint)=\cF(\bfu,A)+\cF_{\eps}(\bfu,B_{\eps}) +\bfinf(C_{\eps}),
\end{equation}
where $\bfinf$ is the set function defined such that, given any measurable set $E$,
\begin{equation}
\bfinf(E)= \begin{cases}  +\infty & \text{ if } \mu(E)>0,\\ 0 & \text{ otherwise.}\end{cases}
\end{equation} 
However, $\cF_{\eps}(\bfu,\ouint)=+\infty$ for all $\bfu$ such that $\|\eta'\|_{\infty} =+\infty$ since, in that case $\bfinf(C_{\eps})=+\infty$ for all $\eps>0$.
On the other hand, if $\|\eta'\|_{\infty}<+\infty$, then for $\eps<2/\|\eta'\|_{\infty}$ it follows that $\bfinf(C_{\eps})=0$ and, by H\"{o}lder's inequality that
\begin{equation}
\cF_{\eps}(\bfu,B_{\eps})  \leq \cF(\bfu,B_{\eps}) \|g(\eps\eta'(\cdot))\|_{\infty}\leq g(\eps \|\eta'\|_{\infty}) \cF(\bfu,B_{\eps})<+\infty.
\end{equation}
Hence, $\tcF(\bfu,\ouint)\neq \cF(\bfu,\ouint)$ if and only if $\cF(\bfu,\ouint)<+\infty$ and $\|\eta'\|_{\infty}=+\infty$.   
\end{proof}

Lemma \ref{tcFlemma} and \eqref{glima} lead to the conclusion that
\begin{equation}
\Glim_{\eps\to 0^{+}} \cF_{\eps}(\cdot , \ouint)=\lsc (\tcF),
\end{equation}
where $\tcF$ is defined as in \eqref{tcFdefb}.

\subsection{Curves with curvature bounded from below}
Consider now the problem in which the space curve parameterized by $\bfu$ has (dimensionless) curvature $\k$ greater than some constant.
In particular, for $p>1$ and $\k_m>0$, take $\Xkp$ as defined in \eqref{fnspace}.
By the compact embedding $W^{3,p}(\ouint; \RR^3) \hookrightarrow C^{2,1-1/p}(\cluint; \RR^3)$, any function $\bfu\in \Xkp$ satisfies the pointwise constraints $|\bfu'(s)|=1$ and $|\bfu''(s)|\geq \k_m$ for almost every $s\in \ouint$. 
For information on the salient embedding results, see Adams and Fournier \cite{adams2003sobolev}.
\subsection{The Sadowsky functional is lower semicontinuous}
Notice that on the set $\Xkp$, the functional $\cF$ may be evaluated via
\begin{equation}
\cF(\bfu;\ouint)= \int_{\ouint} f(\bfu',\bfu'',\bfu''') \ds,
\end{equation}
where $f: \RR^3 \times \RR^3 \times \RR^3 \to \RR$ is defined by
\begin{equation}
f(\bfx,\bfy,\bfz)= \begin{cases}    |\bfy|_{\phantom{m}}^2 \left(1+\frac{[\bfx\cdot (\bfy\times \bfz)]^2}{|\bfy|^6} \right)^2, &\text{ for } |\bfy|\geq \k_m, \\ \phantom{|}\k_m^2\phantom{|} \left(1+\frac{[\bfx\cdot (\bfy\times \bfz)]^2}{\k_m^6} \right)^2, & \text{ for } |\bfy|<\k_m.
\end{cases}
\end{equation}
The map $f$ has the following properties: $f$ and the derivative $f_{\bfz}$ of $f$ with respect to its third argument are continuous; $f$ is convex in its third argument; $f$ is non-negative. A modification of Tonelli's semicontinuity theorem (see, for instance, Buttazzo, Giaquita and Hildebrandt \cite{Buttazzoetal1998}) implies that $\cF(\cdot,\ouint)$ is sequentially weakly lower semicontinuous in $W^{3,p}(\ouint;\RR^3)$ for all $p\geq 1$.
To present this modification of the theorem, it is useful to recall the following terminology.
\begin{definition}
A functional $\cG\colon \cB \to \RR$ is \emph{sequentially weakly lower semicontinuous} in a Banach space $\cB$, if for every $\bfx\in \cB$ and every sequence $\{\bfx_k\}\in \cB$ that converges weakly in $\cB$ to $\bfx$, the condition
\begin{equation}
\cG (\bfx) \leq \liminf_{k\to \infty} \cG(\bfx_k)
\end{equation}
is satisfied.
\end{definition}

\begin{theorem}[Modified Tonelli's semicontinuity theorem]
Let $I\subset \RR$ be a bounded open interval, and for $n\geq 1$ let $f\colon \RR^n \times \RR^n\times \RR^n \to \RR$ be a function with the following properties:
\begin{enumerate}
	\item $f$ and $f_{\bfc}$ are continuous in $(\bfa,\bfb,\bfc)$;
	\item $f$ is non-negative or bounded from below by an $L^1$ function;
	\item $f$ is convex in $\bfc$.
\end{enumerate}
The functional $\cF$ defined by
\begin{equation}
\cF (\bfu)=\int_{I} f(\bfu'(t),\bfu''(t),\bfu'''(t)) \dt
\end{equation}
is then sequentially weakly lower semicontinuous in $W^{3,p}(I; \RR^n)$ for all $p\geq 1$.
\end{theorem}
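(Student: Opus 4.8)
The plan is to run the classical De Giorgi--Serrin lower semicontinuity argument, exploiting that the first two slots of $f$ are fed strongly convergent arguments while only the third slot sees a merely weakly convergent one, and that convexity in the third slot supplies the required one-sided inequality. So suppose $\bfu_k\wto\bfu$ in $W^{3,p}(I;\RR^n)$; passing to a subsequence (harmless for a $\liminf$ inequality) I would assume $\cF(\bfu_k)\to L:=\liminf_k\cF(\bfu_k)$, with nothing to prove if $L=+\infty$. After subtracting the lower bound I may assume $f\geq 0$ (this affects only the Fatou/monotone-convergence bookkeeping). The first real step is to extract the compactness hidden in weak $W^{3,p}$ convergence on a bounded interval: $\bfu_k'\to\bfu'$ and $\bfu_k''\to\bfu''$ uniformly on $\overline I$, while $\bfu_k'''\wto\bfu'''$ in $L^p(I)$. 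For $p>1$ this is Morrey's compact embedding $W^{3,p}(I)\hookrightarrow\hookrightarrow C^2(\overline I)$ (the uniform limit is forced to be $\bfu$ by uniqueness of distributional limits, so the whole sequence converges); for $p=1$ one replaces this by Dunford--Pettis (weak $L^1$ convergence of $\{\bfu_k'''\}$ forces uniform integrability, hence equicontinuity of $\{\bfu_k''\}$) together with Arzel\`a--Ascoli.

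Now fix $M>0$ and put $E_M:=\{t\in I:|\bfu'''(t)|\leq M\}$. Since $f$ is $C^1$ and convex in its last argument, the supporting-hyperplane inequality gives, pointwise on $E_M$,
\[
f(\bfu_k',\bfu_k'',\bfu_k''')\;\geq\; f(\bfu_k',\bfu_k'',\bfu''')\;+\;f_{\bfc}(\bfu_k',\bfu_k'',\bfu''')\cdot(\bfu_k'''-\bfu'''),
\]
and, because $f\geq 0$ and $E_M\subseteq I$,
\[
\cF(\bfu_k)\;\geq\;\int_{E_M}f(\bfu_k',\bfu_k'',\bfu''')\dt\;+\;\int_{E_M}f_{\bfc}(\bfu_k',\bfu_k'',\bfu''')\cdot(\bfu_k'''-\bfu''')\dt.
\]
On $E_M$ the triples $(\bfu_k'(t),\bfu_k''(t),\bfu'''(t))$ lie in a single fixed compact subset of $(\RR^n)^3$ (first two components uniformly bounded, third bounded by $M$), so $f$ and $f_{\bfc}$ along them are uniformly bounded and, by uniform continuity of $f$ and $f_{\bfc}$ on that compact set together with the uniform convergence $\bfu_k'\to\bfu'$, $\bfu_k''\to\bfu''$, they converge uniformly on $E_M$ to $f(\bfu',\bfu'',\bfu''')$ and $f_{\bfc}(\bfu',\bfu'',\bfu''')$. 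Dominated convergence then handles the first integral; in the second, a uniformly (hence $L^\infty$-) convergent multiplier paired with the $L^1$-bounded weakly null sequence $\bfu_k'''-\bfu'''\wto 0$ sends the integral to $0$. Hence $L\geq\int_{E_M}f(\bfu',\bfu'',\bfu''')\dt$, and letting $M\to\infty$ monotone convergence gives $L\geq\int_I f(\bfu',\bfu'',\bfu''')\dt=\cF(\bfu)$, which is the claim.

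The crux---and the reason the truncation on $E_M$ is unavoidable---is that no growth condition on $f$ or $f_{\bfc}$ is assumed, so one cannot directly claim that $f_{\bfc}(\bfu_k',\bfu_k'',\bfu''')$ converges strongly in $L^{p'}(I)$: the offending argument $\bfu'''$ is only $L^p$, not bounded. Passing to $E_M$ is exactly what confines every argument to a compact set and thereby restores both the domination and the (uniform) continuity estimates, after which the monotone-convergence step recovers the full interval $I$. The only other place demanding care is the compactness step when $p=1$, where reflexivity is lost and Morrey's compact embedding must be replaced by the Dunford--Pettis plus Arzel\`a--Ascoli argument indicated above; the remaining manipulations are routine.
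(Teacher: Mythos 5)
Your proof is correct, and its engine is the same as the paper's: the supporting\mbox{-}hyperplane inequality from convexity in the third slot, followed by splitting the linear remainder into a term with a uniformly convergent multiplier against the $L^1$\mbox{-}equibounded sequence $\bfu_k'''-\bfu'''$ and a term with a fixed bounded multiplier against a weakly null $L^1$ sequence. Where you genuinely diverge is the localization needed to keep $f$ and $f_{\bfc}$ bounded along the frozen argument $\bfu'''$ (no growth conditions being assumed): the paper shrinks $I$ to a compact set $K$ chosen via Egorov (to get uniform convergence of $\bfu_k''$, which your Morrey step shows is automatic on all of $\overline I$ once $p>1$) and Lusin (to make $\bfu'''$ continuous, hence bounded, on $K$), and then recovers the full integral through an $\eps$\mbox{-}bookkeeping that splits into the cases $\cF(\bfu)<+\infty$ and $\cF(\bfu)=+\infty$; you instead truncate by the sublevel sets $E_M=\{t\in I: |\bfu'''(t)|\leq M\}$ and send $M\to\infty$ by monotone convergence. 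Your route buys a cleaner finish --- no Egorov, no Lusin, no finite/infinite case distinction --- and your handling of $p=1$ via Dunford--Pettis plus Arzel\`a--Ascoli is more careful than the paper's unexplained assertion of strong $C^1$ convergence and $L^q$ convergence of $\{\bfu_k''\}$. The one point to tidy is the reduction to $f\geq 0$: subtracting an $L^1(I)$ minorant makes the integrand $t$\mbox{-}dependent, so one should carry the term $\int_{E_M}\lambda$ explicitly (it converges to $\int_I\lambda$ by dominated convergence) rather than literally assume $f\geq 0$; as you note, this is routine bookkeeping and does not affect the argument.
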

The proof follows that presented by Buttazzo, Giaquinta and Hildebrandt \cite{Buttazzoetal1998}.
\begin{proof}
Let a sequence $\{\bfu_k\}$ that converges weakly to $\bfu$ in $W^{3,p}(I; \RR^n)$ be given. 
Then it also converges to $\bfu$ weakly in $W^{3,1}(I; \RR^n)$ and strongly in $C^{1}(\bar{I}; \RR^n)$. 
In particular, $\{\bfu_k\}$ and $\{\bfu_k'\}$ converge uniformly on $\bar{I}$.
Passing to a subsequence, it is possible to assume that $\{\bfu_k''\}$ converges in $L^q (I;\RR^n)$ for every $q\geq 1$ and, hence, almost everywhere.

For any $\eps>0$, choose a compact subset $K \subset I$ such that, by Egorov's theorem,
$\bfu_k'' \to \bfu''$ uniformly on $K$ and, by Lusin's theorem (see, for example, Wheeden and Zygmund \cite{wheeden1977}), $\bfu$, $\bfu'$, $\bfu'',$ and $\bfu'''$ are continuous in $K$, and the measure $|I\setminus K|$ is sufficiently small to ensure that
if $\cF(\bfu)$ is finite, then
\begin{equation}\label{finiteineq}
\int_{K} f(\bfu',\bfu'',\bfu''')\dt \geq \int_{I} f(\bfu',\bfu'',\bfu''')\dt -\eps,
\end{equation}
and if, alternatively, $\cF(\bfu)$ is infinite, then
\begin{equation}\label{infiniteineq}
\int_{K} f(\bfu',\bfu'',\bfu''')\dt >\frac{1}{\eps}.
\end{equation}

In either case, since $f$ is convex in its third argument, then $\cF(\bfu_k)$ must obey
\begin{align}
	\begin{split}
\cF(\bfu_k)\geq {} & \int_{K} f(\bfu_k',\bfu_k'',\bfu_k''') \dt \\
\geq {} & \int_{K} f_{\bfc}(\bfu_k',\bfu_k'',\bfu''')\cdot (\bfu_k'''-\bfu''') +f(\bfu_k',\bfu_k'',\bfu''')\dt \\
={} & \int_{K} \left[ f_{\bfc}(\bfu_k',\bfu_k'',\bfu''')-f_{\bfc}(\bfu',\bfu'',\bfu''')\right]\cdot (\bfu_k'''-\bfu''')\dt+\int_{K} f_{\bfc}(\bfu',\bfu'',\bfu''')\cdot (\bfu_k'''-\bfu''')\dt
\\{} & \quad
+ \int_{K} f(\bfu_k',\bfu_k'',\bfu''')\dt.
	\end{split}
\end{align}
For the given choice of $K$, it follows that $f_{\bfc}(\bfu'(\cdot),\bfu''(\cdot),\bfu'''(\cdot))\in\nobreak L^{\infty}(K;\RR^3)$,
since $\bfu'$, $\bfu''$, and $\bfu'''$ are continuous on the compact set $K$ and $f_{\bfc}$ is assumed to be continuous on $\RR^n\times \RR^n\times \RR^n$.
Since $\{\bfu_k'''\}$ converges weakly to $\bfu'''$ in $L^{1}(K;\RR^3)$, it is possible to infer that
\begin{equation}
\int_{K} f_{\bfc}(\bfu',\bfu'',\bfu''')\cdot (\bfu_k'''-\bfu''')\dt \to 0 \text{ as } k\to \infty. 
\end{equation}
The weak convergence of the sequence $\{\bfu_k'''-\bfu'''\}$ to $\mathbf{0}$ in $L^{1}(I,\RR^3)$ implies that the sequence $\{\bfu_k'''-\bfu'''\}$ is equibounded in $L^{1}(I,\RR^3)$.
Moreover, $f_{\bfc}(\bfu_k',\bfu_k'',\bfu''')-f_{\bfc}(\bfu',\bfu'',\bfu''')$ converges uniformly to zero as $k\to \infty$.
Thus,
\begin{equation}
\int_{K} \left[ f_{\bfc}(\bfu_k',\bfu_k'',\bfu''')-f_{\bfc}(\bfu',\bfu'',\bfu''')\right]\cdot (\bfu_k'''-\bfu''')\dt\to 0 \text{ as } k \to \infty.
\end{equation}
Hence, appealing to the positivity of $f$ and \eqref{finiteineq}, for $\cF(\bfu)<+\infty$,
\begin{equation}
\liminf_{k\to \infty} \cF(\bfu_k) \geq \int_{K} f(\bfu',\bfu'',\bfu''') \dt \geq \cF(\bfu) -\eps.
\end{equation}
Similarly, by \eqref{infiniteineq}, if $\cF(\bfu)$ is infinite, then
\begin{equation}
\liminf_{k\to \infty} \cF(\bfu_k) \geq \int_{K} f(\bfu',\bfu'',\bfu''') \dt >\frac{1}{\eps}.
\end{equation}
Since $\eps>0$ is arbitrary, the conclusion follows.   
\end{proof}

A useful Lemma, which might be of independent interest, is next stated and proven.
\begin{lemma}\label{Fiscont}
For all $\k_m>0$ and $p\geq 4$, the Sadowsky functional $\cF(\,\cdot \,; \ouint)$ is continuous on $\Xkp$ with respect to strong convergence in $W^{3,p}(\ouint;\RR^3)$.
\end{lemma}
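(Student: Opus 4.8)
The plan is to exploit the polynomial structure of the integrand on $\Xkp$. Since $\k=|\bfu''|$ and, by \eqref{etadef}, $\eta=T(\bfu)/|\bfu''|^3$ with $T(\bfu):=\bfu'\cdot(\bfu''\times\bfu''')=\det[\bfu',\bfu'',\bfu''']$, the Sadowsky functional \eqref{gamlim} can be written, for every $\bfu\in\Xkp$, as
\begin{equation}\label{Fexpanded}
\cF(\bfu;\ouint)=\int_{\ouint}\left(|\bfu''|^2+\frac{2\,T(\bfu)^2}{|\bfu''|^4}+\frac{T(\bfu)^4}{|\bfu''|^{10}}\right)\ds,
\end{equation}
which is exactly the representation $\cF(\bfu;\ouint)=\int_\ouint f(\bfu',\bfu'',\bfu''')\ds$ noted above, with $f$ expanded. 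So it suffices to prove that each of the three terms in \eqref{Fexpanded}, integrated over $\ouint$, is continuous along a strongly convergent sequence $\bfu_k\to\bfu$ in $\Xkp$. By the compact embedding $W^{3,p}(\ouint;\RR^3)\hookrightarrow C^{2,1-1/p}(\cluint;\RR^3)$, the sequences $\{\bfu_k'\}$ and $\{\bfu_k''\}$ converge uniformly to $\bfu'$ and $\bfu''$; in particular they are uniformly bounded, $|\bfu''(s)|\ge\km$ and $|\bfu_k''(s)|\ge\km$ for every $s$, and hence $1/|\bfu_k''|^m\to1/|\bfu''|^m$ uniformly for every integer $m\ge1$. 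Moreover, since $p\ge4$ and $\ouint$ is bounded, $\bfu_k'''\to\bfu'''$ in $L^4(\ouint;\RR^3)$ and $\{\|\bfu_k'''\|_4\}$ is bounded.

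The crucial step is the convergence $T(\bfu_k)\to T(\bfu)$ in $L^4(\ouint)$. This follows from the multilinearity of the determinant via the telescoping identity
\begin{equation}
T(\bfu_k)-T(\bfu)=\det[\bfu_k'-\bfu',\bfu_k'',\bfu_k''']+\det[\bfu',\bfu_k''-\bfu'',\bfu_k''']+\det[\bfu',\bfu'',\bfu_k'''-\bfu''']
\end{equation}
together with H\"{o}lder's inequality: in the first two terms the first factor tends to zero uniformly while $\|\bfu_k'''\|_4$ stays bounded, and in the third term $\bfu'$ and $\bfu''$ are bounded while $\|\bfu_k'''-\bfu'''\|_4\to0$. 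From $T(\bfu_k)\to T(\bfu)$ in $L^4$ and boundedness of $\{\|T(\bfu_k)\|_4\}$ one then deduces $T(\bfu_k)^2\to T(\bfu)^2$ in $L^2$, using $a_k^2-a^2=(a_k-a)(a_k+a)$ with the Cauchy--Schwarz inequality, and $T(\bfu_k)^4\to T(\bfu)^4$ in $L^1$, using $a_k^4-a^4=(a_k-a)(a_k+a)(a_k^2+a^2)$ with H\"{o}lder's inequality for the exponents $4$, $4$, $2$.

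To finish, combine these with the uniform convergences: $|\bfu_k''|^2\to|\bfu''|^2$ uniformly; the product $2\,T(\bfu_k)^2/|\bfu_k''|^4$ converges in $L^1$, being the product of an $L^1$-convergent sequence with a uniformly bounded, uniformly convergent sequence; and likewise $T(\bfu_k)^4/|\bfu_k''|^{10}\to T(\bfu)^4/|\bfu''|^{10}$ in $L^1$. Integrating \eqref{Fexpanded} yields $\cF(\bfu_k;\ouint)\to\cF(\bfu;\ouint)$, which is the asserted continuity. The only delicate point — and the reason the hypothesis $p\ge4$ enters — is the quartic dependence of the integrand on $\bfu'''$: the H\"{o}lder estimates above require $L^4$ control of third derivatives, and for $p<4$ the term $T(\bfu)^4/|\bfu''|^{10}$ need not even be integrable, so the statement is expected to fail.
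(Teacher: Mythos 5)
Your proof is correct and follows essentially the same route as the paper's: both expand the integrand into the three terms $|\bfu''|^2+2T^2/|\bfu''|^4+T^4/|\bfu''|^{10}$ and control each by combining uniform convergence of $\bfu_k'$, $\bfu_k''$ (and of $1/|\bfu_k''|^m$, via the lower bound $\km$) with $L^4$ convergence of $\bfu_k'''$, using telescoping and H\"{o}lder estimates. Your packaging --- first proving $T(\bfu_k)\to T(\bfu)$ in $L^4$ by multilinearity of the determinant and then deducing convergence of its powers in $L^2$ and $L^1$ --- is a cleaner organization of the same estimates the paper carries out term by term in its decompositions $M_1,M_2,M_3$ and $N_1,N_2,N_3$.
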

\begin{proof}
Let $\bfu\in \Xkp$ and a sequence $\{\bfu_n\}\subset \Xkp$ consistent with $\bfu_n\to \bfu$ as $n\to \infty$ be given.
Then
\begin{align}
	\begin{split}
	|\cF(\bfu,\ouint)-\cF(\bfu_n,\ouint) |
	=    {} & \left| \int_{\ouint} f(\bfu',\bfu'',\bfu''') \dt - \int_{\ouint} f(\bfu_n',\bfu_n'',\bfu_n''')\dt \right| \\
	\leq {} & \int_{\ouint} \left | f(\bfu',\bfu'',\bfu''')- f(\bfu_n',\bfu_n'',\bfu_n''') \right|\dt.		
	\end{split}
\end{align}
Further,
\begin{align}
\begin{split}
	|\cF(\bfu,\ouint)-\cF(\bfu_n,\ouint) |\leq {} &  \int_{\ouint} \left| |\bfu''|^2-|\bfu_n''|^2 \right| \dt +2 \int_{\ouint} \left| \frac{[\bfu'\cdot (\bfu''\times \bfu''')]^2}{|\bfu''|^4}- \frac{[\bfu_n'\cdot (\bfu_n''\times \bfu_n''')]^2}{|\bfu_n''|^4}\right| \dt \\
{} & +\int_{\ouint} \left | \frac{[\bfu'\cdot (\bfu''\times \bfu''')]^4}{|\bfu''|^{10}}- \frac{[\bfu_n'\cdot (\bfu_n''\times \bfu_n''')]^4}{|\bfu_n''|^{10}}\right |	\dt\\
={} & L+2M+N.
\end{split}
\end{align}
	Clearly, $L\to 0$ as $n\to \infty$ if $p\geq 2$. Next, consider the problem of estimating $M$ and $N$.
	
Since $|\bfu''|$ and $|\bfu_n''|$ are both greater than or equal to $\kappa_m$, it follows that
\begin{align}
	\begin{split}
		\k_m^{8} M \leq {} &   \int_{\ouint} \left| |\bfu_n''|^4 [\bfu'\cdot (\bfu''\times \bfu''')]^2 - |\bfu''|^4 [\bfu_n'\cdot (\bfu_n''\times \bfu_n''')]^2 \right| \dt\\
		  \leq {} &  \int_{\ouint} [\bfu'\cdot (\bfu''\times \bfu''')]^2 \left| |\bfu_n''|^4 -|\bfu''|^4 \right|\dt +\int_{\ouint} |\bfu''|^4 \left | [\bfu_n'\cdot (\bfu_n''\times \bfu''')]^2  - [\bfu_n'\cdot (\bfu_n''\times \bfu_n''')]^2 \right| \dt\\
			{} & +\int_{\ouint} |\bfu''|^4 \left | [\bfu_n'\cdot (\bfu_n''\times \bfu''')]^2  - [\bfu'\cdot (\bfu''\times \bfu''')]^2 \right| \dt\\
			={} & M_1+M_2+M_3.
	\end{split}
\end{align}
Notice that, since $\bfu \in C^{2,1-1/p} (\cluint; \RR^3)$, $\bfu'$ and $\bfu''$ obey $\bfu'\in L^{\infty}(\ouint;\RR^3)$ and $\bfu'' \in L^{\infty}(\ouint;\RR^3)$. 
By the general form of H\"{o}lder's inequality and the inequality $|\bfa\cdot (\bfb \times \bfc)|\leq |\bfa| |\bfb| |\bfc|$, $M_1$ is bounded above in accord with
\begin{equation}\label{Moneineq}
M_1 \leq \||\bfu_n''|^4 -|\bfu''|^4\|_{\infty} \| \bfu''\|^2_{\infty} \|\bfu'''\|_{2}^{2}.
\end{equation}
The compact embedding $W^{3,p}(\ouint;\RR^3) \hookrightarrow~C^{2,1-1/p} (\cluint;\RR^3)$ ensures that $|\bfu_n''|\to |\bfu''|$ uniformly. Hence, it is possible to infer that $M_1\to 0$ as $n\to \infty$.
Moreover, $M_2$ obeys
\begin{align}\label{Mtwoineq}
	\begin{split}
M_2 
\leq {} &\left \| \bfu'' \right \|^4_{\infty} \|\bfu_n' \cdot [\bfu_n'' \times (\bfu'''+\bfu_n''')] \|_{2} \|\bfu_n' \cdot [\bfu_n'' \times (\bfu'''-\bfu_n''')]\|_{2}\\
\leq {} &\left \| \bfu'' \right \|^4_{\infty} \left \| \bfu_n'' \right \|^2_{\infty} \left( \|\bfu'''\|_{2} +\|\bfu_n''' \|_{2}\right) \|\bfu'''-\bfu_n'''\|_{2}.
	\end{split}
\end{align}
Since $\bfu''_n\to \bfu''$ uniformly on $\ouint$ and $\bfu'''_n\to \bfu'''$ in $L^2(\ouint;\RR^3)$, it follows that $M_2\to 0$ as $n\to \infty$.
On permuting the triple products, it follows that
\begin{align}\label{Mthrineq}
	\begin{split}
M_3
\leq {} &\left \| \bfu'' \right \|^4_{\infty} \| \bfu''' \cdot (\bfu_n' \times \bfu''_n+\bfu'\times \bfu'')\|_{2} \|\bfu'''\cdot (\bfu_n'\times \bfu_n''-\bfu'\times \bfu'')\|_{2} \\
\leq {} &\left \| \bfu'' \right \|^4_{\infty} \left \| \bfu''' \right \|_{2}^2  \|\bfu_n' \times \bfu''_n+\bfu'\times \bfu''\|_{\infty} \|\bfu_n'\times \bfu_n''-\bfu'\times \bfu''\|_{\infty}.
\end{split}
\end{align}
Thus, by the continuity of the cross product and the uniform convergence of $\{\bfu_n'\}$ and $\{\bfu_n''\}$ to $\bfu'$ and $\bfu''$, respectively, $M_3\to 0$ as $n\to \infty$.

Similarly,
\begin{align}
	\begin{split}
		\k_m^{20}N 
		\leq {}  & \int_{\ouint} [\bfu'\cdot (\bfu''\times \bfu''')]^{4} \left | |\bfu''|^{10} -|\bfu_n''|^{10} \right| \dt 
		+\int_{\ouint} |\bfu''|^{10} \left| [\bfu_n'\cdot(\bfu_n''\times\bfu''')]^4 -[\bfu_n'\cdot(\bfu_{n}''\times \bfu_n''')]^{4}\right| \dt\\
		{} & \int_{\ouint} |\bfu''|^{10} \left| [\bfu_n'\cdot (\bfu_n''\times \bfu''')]^4-[\bfu'\cdot (\bfu''\times \bfu''')]^4\right| \dt\\
		={} & N_1+N_2+N_3.
	\end{split}
\end{align}
Calculations analogous to those leading to the bound \eqref{Moneineq} satisfied by $M_1$ yield
\begin{equation}
N_1 \leq \| |\bfu''|^{10} -|\bfu_n''|^{10} \|_{\infty} \| \bfu'' \|^{4}_{\infty} \|\bfu'''\|_{4}^{4}.
\end{equation}
Factoring the integrand of $N_2$ and repeatedly applying H\"{o}lder's inequality leads to
\begin{align}
	\begin{split}
N_2 
\leq {} & \|\bfu''\|_{\infty}^{10} \|\bfu''_n\|_{\infty}^{4} \left(\|\bfu'''\|_{4}^2+\|\bfu_n'''\|_{4}^{2}\right) \|\bfu'''+\bfu_n'''\|_{4} \|\bfu'''-\bfu_n'''\|_{4}\\
\leq {} &  \|\bfu''\|_{\infty}^{10} \|\bfu''_n\|_{\infty}^{4} \left(\|\bfu'''\|_{4}+\|\bfu_n'''\|_{4}\right)^3\|\bfu'''-\bfu_n'''\|_{4},
	\end{split}
\end{align}
where, in the second line, Minkowski's theorem and the inequality $(a+b)^3 \geq (a^2+b^2)(a+b)$, for $a,b\geq 0$, have been used.
Similarly,
\begin{align}
	\begin{split}
		N_3 \leq {} & \|\bfu''\|_{\infty}^{10} \| [\bfu'''\cdot (\bfu_n'\times \bfu_n'')]^2+[\bfu'''\cdot (\bfu'\times \bfu'')]^2 \|_{2} \|\bfu'''\cdot (\bfu_n'\times \bfu_n''+\bfu'\times \bfu'')\|_{4}\\
		{} & \quad\|\bfu'''\cdot (\bfu_n'\times \bfu_n''-\bfu'\times \bfu'')\|_{4}\\
		\leq {} & \|\bfu''\|_{\infty}^{10} \|\bfu'''\|_{4}^{4}  \left(\| \bfu_n'\times \bfu_n''\|_{\infty}+\|\bfu'\times \bfu''\|_{\infty} \right)^3 \|\bfu_n'\times \bfu_n''-\bfu'\times \bfu'' \|_{\infty}.
	\end{split}
\end{align}
On taking $p\geq 4$, it is clear that $N_1,N_2,N_3\to 0$ as $n\to \infty$.

This proves that $\lim_{n\to \infty} \cF(\bfu_n,\ouint)=~\cF(\bfu,\ouint)$.\phantom{.}
\hfill\hfill  
\end{proof}

\begin{theorem}
If $p\geq 4$, any weakly sequentially lower semicontinuous function $\cG\colon \Xkp \to \RR$ that obeys $\cG(\bfu)\leq \cF (\bfu,\ouint)$ on the dense subset $\Yk$, also satisfies the inequality 
\begin{equation}
\cG(\bfu) \leq \cF(\bfu,\ouint)
\end{equation}
for all $\bfu \in \Xkp$. It follows that for $\bfu\in \Xkp$
\begin{equation}
\Glim_{\eps\searrow 0} \cF_{\eps}(\bfu, \ouint) = \cF(\bfu,\ouint)
\end{equation}
with respect to the weak topology on $W^{3,p}(\ouint;\RR^3)$ for $p\geq 4$.
\end{theorem}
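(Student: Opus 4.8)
The plan is to deduce the first assertion from the density of $\Yk$ in $\Xkp$ together with the continuity result of Lemma \ref{Fiscont}, and then to read off the $\Gamma$-convergence statement by identifying the envelope $\lsc(\tcF)$ with $\cF(\cdot,\ouint)$ on $\Xkp$. For the first assertion, fix $\bfu\in\Xkp$. By its definition \eqref{fnspace}, $\Xkp=\text{cl}_{W^{3,p}(\ouint;\RR^3)}(\Yk)$, so there is a sequence $\{\bfu_n\}\subset\Yk$ with $\bfu_n\to\bfu$ strongly, hence weakly, in $W^{3,p}(\ouint;\RR^3)$. Using the assumed weak sequential lower semicontinuity of $\cG$ and the hypothesis $\cG\le\cF(\cdot,\ouint)$ on $\Yk$,
\[
\cG(\bfu)\le\liminf_{n\to\infty}\cG(\bfu_n)\le\liminf_{n\to\infty}\cF(\bfu_n,\ouint).
\]
Since $p\ge4$, Lemma \ref{Fiscont} says $\cF(\cdot,\ouint)$ is continuous on $\Xkp$ for strong $W^{3,p}$-convergence, so the last quantity equals $\cF(\bfu,\ouint)$; thus $\cG(\bfu)\le\cF(\bfu,\ouint)$. (The same chain shows $\cG(\bfu)<+\infty$, so allowing $\cG$ to be extended-real-valued a priori costs nothing.)

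For the second assertion, recall it has already been shown, via \eqref{glima} and Lemma \ref{tcFlemma} with $X=\Xkp$, that $\Glim_{\eps\searrow0}\cF_\eps(\cdot,\ouint)=\lsc(\tcF)$, the weakly sequentially lower semicontinuous envelope of the functional $\tcF$ of \eqref{tcFdefb}. It therefore suffices to prove $\lsc(\tcF)=\cF(\cdot,\ouint)$ on $\Xkp$. The inequality $\cF(\cdot,\ouint)\le\lsc(\tcF)$ holds because, by the Modified Tonelli semicontinuity theorem applied to the integrand $f$, $\cF(\cdot,\ouint)$ is weakly sequentially lower semicontinuous, while $\cF(\cdot,\ouint)\le\tcF$ pointwise (on $\Xkp$, $\tcF$ equals either $\cF(\cdot,\ouint)$ or $+\infty$); hence $\cF(\cdot,\ouint)$ is one of the competitors whose supremum defines $\lsc(\tcF)$. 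For the reverse inequality, $\lsc(\tcF)$, being a supremum of weakly sequentially lower semicontinuous functions, is itself weakly sequentially lower semicontinuous and satisfies $\lsc(\tcF)\le\tcF$; moreover every $\bfu\in\Yk$ is smooth with curvature bounded below by $\k_m$, so $\eta\in C^\infty(\cluint)$ and in particular $\eta'\in L^\infty(\ouint)$, which gives $\tcF(\bfu)=\cF(\bfu,\ouint)$ and hence $\lsc(\tcF)\le\cF(\cdot,\ouint)$ on $\Yk$. Applying the first assertion with $\cG=\lsc(\tcF)$ then yields $\lsc(\tcF)\le\cF(\cdot,\ouint)$ on all of $\Xkp$, and equality follows.

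The substantive content has been front-loaded into Lemma \ref{Fiscont} and the Modified Tonelli theorem; the argument above is the bookkeeping that glues them together through the density of $\Yk$. The step where $p\ge4$ is essential is precisely the appeal to Lemma \ref{Fiscont}: the top-order contribution to $\cF$ is the quartic term $[\bfu'\cdot(\bfu''\times\bfu''')]^4/|\bfu''|^{10}$, whose continuity under strong $W^{3,p}$-convergence forces $\bfu'''\in L^4$, i.e. $p\ge4$; for $1<p<4$ the envelope identification fails at exactly this step. A secondary point requiring care is that the lower semicontinuous envelope in \eqref{glima}, the semicontinuity in Tonelli's theorem, and the hypothesis on $\cG$ must all refer to the same (weak $W^{3,p}$) notion of sequential lower semicontinuity; since $W^{3,p}$ is reflexive for $p>1$ and only bounded sequences are relevant, this is harmless.
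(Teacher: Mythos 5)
Your proposal is correct and follows essentially the same route as the paper: approximate $\bfu$ by a strongly (hence weakly) convergent sequence from the dense set $\Yk$, use the weak sequential lower semicontinuity of $\cG$ together with $\cG\leq\cF$ on $\Yk$, and invoke Lemma \ref{Fiscont} to identify the $\liminf$ with $\cF(\bfu,\ouint)$. The only difference is that you spell out the envelope identification $\lsc(\tcF)=\cF(\cdot,\ouint)$ (via the Modified Tonelli theorem for one inequality and the first assertion applied to $\cG=\lsc(\tcF)$ for the other), a chain the paper compresses into ``the conclusion follows''; this added detail is a faithful completion of the intended argument rather than a new approach.
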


\begin{proof}
Let $\bfu\in \Xkp$ be given. 
By hypothesis, $\cG$ is weakly lower semicontinuous, and, therefore, if there exists a sequence $\{\bfu_n\}\subset \Yk$ such that $\bfu_n\rightharpoonup \bfu$ in $W^{3,p}(\ouint; \RR^3)$, then $\cG$ is bounded above in accord with
\begin{equation}
\cG(\bfu) \leq \liminf_{n\to \infty} \cG(\bfu_n)\leq \liminf_{n\to\infty} \cF(\bfu_n,\ouint).
\end{equation} 
Clearly, any curve $\bfu\in \Yk$ satisfies $\|\eta'\|_{\infty}<+\infty$. 
Therefore, it suffices to find a sequence $\{\bfu_n\}\subset \Yk$ such that $\bfu_n \rightharpoonup \bfu$ in $W^{3,p}(\ouint; \RR^3)$ and
the limit
 \begin{equation*}\lim_{n\to \infty} \cF(\bfu_n,\ouint)=\cF(\bfu,\ouint)\end{equation*}
holds.
By definition of $\Xkp$, it is possible to choose a sequence $\{\bfu_n\} \subset Y$ such that $\bfu_n\to \bfu$ strongly (and, therefore weakly, as well) in $W^{3,p}(\ouint;\RR^3)$.
By Lemma \ref{Fiscont}, $\cF(\bfu,\ouint)=\lim_{n\to \infty} \cF(\bfu_n,\ouint)$, and the conclusion follows.   
\end{proof}
\begin{theorem}
If $p\geq 4$, any lower semicontinuous function $\cG\colon \Xkp \to \RR$ that obeys $\cG(\bfu)\leq \cF (\bfu,\ouint)$ on the dense subset $\Yk$, also satisfies the inequality 
\begin{equation}
\cG(\bfu) \leq \cF(\bfu,\ouint)
\end{equation}
for all $\bfu \in \Xkp$. It follows that for $\bfu\in \Xkp$
\begin{equation}
\Glim_{\eps\searrow 0} \cF_{\eps}(\bfu, \ouint) = \cF(\bfu,\ouint)
\end{equation}
with respect to the strong topology on $W^{3,p}(\ouint;\RR^3)$ for $p\geq 4$.
\end{theorem}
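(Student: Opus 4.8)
The plan is to follow the argument given for the preceding theorem almost verbatim, replacing the weak topology of $W^{3,p}(\ouint;\RR^3)$ throughout by its strong (norm) topology. The first thing I would verify is that the identification \eqref{glima} of the $\Gamma$-limit is unaffected by the change of topology: the strong topology on $\Xkp$ is metrizable and the integrand of $\cF_{\eps}(\cdot,\ouint)$ increases with $\eps$ regardless of the sign of $\eta'$, so Remark~1.40 of Braides \cite{Braides}, applied to a decreasing subsequence and combined with Lemma~\ref{tcFlemma}, still gives
\begin{equation*}
\Glim_{\eps\searrow 0}\cF_{\eps}(\cdot,\ouint)=\lsc(\tcF),
\end{equation*}
where $\lsc(\tcF)$ now denotes the lower semicontinuous envelope of $\tcF$ with respect to strong convergence in $W^{3,p}(\ouint;\RR^3)$. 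Everything then reduces to showing that $\lsc(\tcF)=\cF(\cdot,\ouint)$ on $\Xkp$.

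Second, I would prove the comparison inequality for $\cG$. Let $\bfu\in\Xkp$. Since $\Xkp$ is by definition the $W^{3,p}(\ouint;\RR^3)$-closure of $\Yk$, there is a sequence $\{\bfu_n\}\subset\Yk$ with $\bfu_n\to\bfu$ strongly. The strong lower semicontinuity of $\cG$ and the hypothesis $\cG\leq\cF(\cdot,\ouint)$ on $\Yk$ give
\begin{equation*}
\cG(\bfu)\leq\liminf_{n\to\infty}\cG(\bfu_n)\leq\liminf_{n\to\infty}\cF(\bfu_n,\ouint),
\end{equation*}
and Lemma~\ref{Fiscont} shows $\cF(\bfu_n,\ouint)\to\cF(\bfu,\ouint)$, whence $\cG(\bfu)\leq\cF(\bfu,\ouint)$. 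The point worth stressing is that, in contrast to the weakly lower semicontinuous version, no compactness argument and no appeal to a Tonelli-type theorem are needed here: the recovery sequence is supplied directly by the construction of $\Xkp$, and only strong lower semicontinuity of $\cG$ along it enters.

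Third, I would deduce the $\Gamma$-limit. Fix $\bfu\in\Xkp$ and take $\{\bfu_n\}\subset\Yk$ with $\bfu_n\to\bfu$ strongly. Since $\eta'$ is bounded on $\Yk$, \eqref{tcFdefb} gives $\tcF(\bfu_n)=\cF(\bfu_n,\ouint)$, which by Lemma~\ref{Fiscont} tends to $\cF(\bfu,\ouint)$; as the lower semicontinuous envelope never exceeds the lower limit along any convergent sequence, this yields $\lsc(\tcF)(\bfu)\leq\cF(\bfu,\ouint)$ (the same conclusion follows from the comparison inequality applied with $\cG=\lsc(\tcF)$). In particular $\lsc(\tcF)$ is finite on $\Xkp$, because $\cF(\cdot,\ouint)$ is finite there when $p\geq 4$, as follows from the compact embedding $W^{3,p}(\ouint;\RR^3)\hookrightarrow C^{2,1-1/p}(\cluint;\RR^3)$ and the lower bound $|\bfu''|\geq\km$. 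For the reverse inequality, Lemma~\ref{Fiscont} asserts that $\cF(\cdot,\ouint)$ is continuous, hence lower semicontinuous, with respect to strong convergence, and since $\cF(\cdot,\ouint)\leq\tcF$ pointwise by \eqref{tcFdefb}, it follows that $\cF(\cdot,\ouint)\leq\lsc(\tcF)$. Combining the two inequalities gives $\lsc(\tcF)=\cF(\cdot,\ouint)$, so that $\Glim_{\eps\searrow 0}\cF_{\eps}(\bfu,\ouint)=\cF(\bfu,\ouint)$ for all $\bfu\in\Xkp$.

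I do not expect a serious obstacle once Lemmas~\ref{tcFlemma} and \ref{Fiscont} are available. The only points calling for any care are that the monotone-family characterization of the $\Gamma$-limit remains valid in the strong topology (immediate, by metrizability) and that $\cF(\cdot,\ouint)$ is strongly lower semicontinuous, which is exactly what Lemma~\ref{Fiscont} provides, in the stronger form of continuity, and which is why the hypothesis $p\geq 4$ is inherited. If anything, this statement is logically a shade stronger than its weak-topology counterpart, since strong lower semicontinuity is a weaker requirement on $\cG$ than weak lower semicontinuity; yet its proof is the simpler of the two, because the recovery sequence need only converge strongly, which happens automatically.
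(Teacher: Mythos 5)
Your proposal is correct and follows essentially the same route as the paper: use the definition of $\Xkp$ as the strong closure of $\Yk$ to produce a strongly convergent recovery sequence, apply the lower semicontinuity of $\cG$ along it, and invoke Lemma~\ref{Fiscont} to pass $\cF(\bfu_n,\ouint)\to\cF(\bfu,\ouint)$. The paper's own proof is just a terser version of this (deferring the identification $\Glim\cF_{\eps}=\lsc(\tcF)$ and the reverse inequality $\cF\leq\lsc(\tcF)$ to the earlier discussion), whereas you spell those steps out explicitly.
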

\begin{proof}
Let $\bfu\in \Xkp$ be given. 
As before, it suffices to find a sequence $\{\bfu_n\}\subset \Yk$ such that $\bfu_n \to \bfu$ in $W^{3,p}(\ouint; \RR^3)$ and
the limit
 \begin{equation*}\lim_{n\to \infty} \cF(\bfu_n,\ouint)=\cF(\bfu,\ouint)\end{equation*}
holds.
By Lemma \ref{Fiscont}, $\cF(\bfu,\ouint)=\lim_{n\to \infty} \cF(\bfu_n,\ouint)$, and the conclusion follows.   
\end{proof}

The theorem have the following important corollary.
\begin{corollary}
On the set $X^{+,p}=\bigcup\limits_{\k_m>0} \Xkp$, the Sadowsky functional is the $\Gamma$-limit of the elastic energy $\cF_{\eps}(\cdot,\ouint)$ with respect to the weak topology and with respect to the strong topology on $W^{3,p}(\ouint;\RR^3)$.
\end{corollary}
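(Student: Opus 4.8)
The plan is to reduce the corollary to the two theorems just proved, using three ingredients: the pointwise bound $\cF_\eps\geq\cF$, which holds because $g\geq 1$; a localized version of the modified Tonelli semicontinuity theorem; and the fact that the $\Gamma$-limit is already known on each $\Xkp$. Throughout I fix $p\geq 4$ and a sequence $\eps_j\searrow 0$, and I regard $X^{+,p}=\bigcup_{\km>0}\Xkp$ with the topology it inherits from $W^{3,p}(\ouint;\RR^3)$, noting that this is an increasing family: $\Xkp\subset X^{\km',p}$ whenever $0<\km'\leq\km$. To establish the $\Gamma$-convergence I must check, for each $\bfu\in X^{+,p}$, the $\liminf$ inequality \eqref{liminfcond} along every admissible sequence and produce one recovery sequence realizing \eqref{limsupcond}, and this must be carried out for the weak and for the strong topology.

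For the $\liminf$ inequality I would first observe that, since $g\geq 1$, one has $\cF_\eps(\bfw,\ouint)\geq\cF(\bfw,\ouint)$ for every $\bfw\in X^{+,p}$ and every $\eps>0$: if $\cF_\eps(\bfw,\ouint)=+\infty$ the inequality is trivial, and otherwise $\eta\in W^{1,1}\parouint$ and the bound $g\geq 1$ may be applied under the integral sign. Hence \eqref{liminfcond} will follow once I know that $\cF(\cdot,\ouint)$ is sequentially lower semicontinuous on $X^{+,p}$ with respect to weak convergence in $W^{3,p}(\ouint;\RR^3)$, because then $\liminf_j\cF_{\eps_j}(\bfu_j,\ouint)\geq\liminf_j\cF(\bfu_j,\ouint)\geq\cF(\bfu,\ouint)$ whenever $\{\bfu_j\}\subset X^{+,p}$ and $\bfu_j\to\bfu$. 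This semicontinuity is the step I expect to be the main obstacle, because $X^{+,p}$ carries no uniform positive lower bound on the curvature, so the modified Tonelli theorem cannot be applied to $X^{+,p}$ directly; the integrand that represents $\cF$ satisfies the hypotheses of that theorem only after a lower-curvature cutoff has been imposed. The way around this is to localize. Given $\bfu\in X^{+,p}$, choose $\km>0$ with $\bfu\in\Xkp$, so that $|\bfu''|\geq\km$ on $\cluint$ (the embedding $W^{3,p}(\ouint;\RR^3)\hookrightarrow C^{2,1-1/p}(\cluint;\RR^3)$ makes $\bfu''$ continuous). For any sequence $\{\bfu_j\}\subset X^{+,p}$ with $\bfu_j\to\bfu$ weakly in $W^{3,p}(\ouint;\RR^3)$, the compactness of that embedding forces $\bfu_j''\to\bfu''$ uniformly, hence $|\bfu_j''|\geq\km/2$ on $\cluint$ for all $j\geq J$. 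Let $f^{\km/2}$ denote the integrand $f$ introduced before the modified Tonelli theorem but with its lower-curvature parameter replaced by $\km/2$; this $f^{\km/2}$ is continuous together with its derivative in the last slot, nonnegative, and convex in that slot, and on any element of $W^{3,p}(\ouint;\RR^3)$ whose second derivative has modulus at least $\km/2$ almost everywhere the cutoff in $f^{\km/2}$ is inactive, so that the associated integral functional agrees there with $\cF(\cdot,\ouint)$. Applying the modified Tonelli theorem to $f^{\km/2}$, and recalling that $|\bfu''|\geq\km/2$ and $|\bfu_j''|\geq\km/2$ for $j\geq J$, I would obtain
\[
\cF(\bfu,\ouint)=\int_{\ouint}f^{\km/2}(\bfu',\bfu'',\bfu''')\ds\leq\liminf_{j\to\infty}\int_{\ouint}f^{\km/2}(\bfu_j',\bfu_j'',\bfu_j''')\ds=\liminf_{j\to\infty}\cF(\bfu_j,\ouint),
\]
the last equality because the value of the $\liminf$ depends only on indices $j\geq J$. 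The same reasoning applies to strongly convergent sequences, so \eqref{liminfcond} holds on $X^{+,p}$ for both topologies.

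Finally, for the recovery sequence I would argue as follows. Given $\bfu\in X^{+,p}$, pick $\km>0$ with $\bfu\in\Xkp$. The theorem already proved on $\Xkp$ (for the strong topology) gives $\Glim_{\eps\searrow 0}\cF_\eps(\cdot,\ouint)=\cF(\cdot,\ouint)$ on $\Xkp$, so by the definition of the $\Gamma$-limit there is a sequence $\{\bfu_j\}\subset\Xkp\subset X^{+,p}$ with $\bfu_j\to\bfu$ strongly---hence also weakly---in $W^{3,p}(\ouint;\RR^3)$ and $\limsup_j\cF_{\eps_j}(\bfu_j,\ouint)\leq\cF(\bfu,\ouint)$. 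This one sequence is a recovery sequence in $X^{+,p}$ for the weak and for the strong topology. Combining it with the $\liminf$ inequality of the previous paragraph gives $\Glim_{\eps\searrow 0}\cF_\eps(\bfu,\ouint)=\cF(\bfu,\ouint)$ for every $\bfu\in X^{+,p}$, with respect to weak convergence and with respect to strong convergence in $W^{3,p}(\ouint;\RR^3)$, which is the corollary.
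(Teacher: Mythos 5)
Your proof is correct and follows essentially the same route as the paper: both arguments hinge on the observation that a sequence converging (weakly or strongly) to $\bfu\in \Xkp$ has uniformly convergent second derivatives and therefore eventually lies in $X^{\km/2,p}$, where the already-established results apply, while the recovery sequence is inherited directly from the theorem on $\Xkp$. The only difference is cosmetic: where the paper cites the $\Gamma$-convergence on $X^{\km/2,p}$ to obtain the $\liminf$ inequality, you inline that step via the bound $g\geq 1$ and the modified Tonelli theorem with cutoff $\km/2$.
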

\begin{proof}
Let $\bfu\in X^{+,p}$. Then $\bfu\in \Xkp$ for some $\km>0$.
That the limsup condition \eqref{limsupcond} is satisfied follows from the $\Gamma$ convergence of $\cF_{\eps}(\cdot,I)$ to $\cF(\cdot,I)$ in $\Xkp$.

Let $\{\bfu_j\}$ be a sequence in $X^{+,p}$ such that $\bfu_j\to \bfu$ either strongly or weakly.
Then $\bfu \in \Xkp$ for some $\km>0$ and for either modes of convergence, $\bfu_j''\to \bfu$ uniformly on $\cluint$.
Hence, $\bfu_j\in X^{\km/2,p}$ for $j\geq J$ for some sufficiently large $J>0$.
Thus \eqref{liminfcond} holds since $\cF_{\eps_j} (\cdot ,I)$ $\Gamma$-converges to $\cF(\cdot,I)$ in $X^{\km/2,p}$.
\end{proof}

The set $X^{+,p}$ consists of those arclength parameterized curves in $W^{3,p}(I;\RR^3)$ well-approximated by a sequence of smooth curves for which the infima of their curvatures is bounded away from zero. The space $X^{+,p}$ is the natural space of curves with nonvanishing curvature described in the Introduction.
\section{Lower semicontinuity at curves with isolated inflection points}
A limitation of this analysis is that nonorientable ribbons do not have centerlines in the space $X^{+,p}$, since all such centerlines have strictly positive curvature.
Since the Sadowsky functional was proposed in the context of the shape of a M\"{o}bius band, it is of interest to determine whether the $\Gamma$-convergence result extends to a space of centerlines which allows, at least, for isolated inflection points. 
The following lemma provides a step in this direction.
\begin{lemma}
The Sadowsky functional $\cF$ is sequentially weakly lower semicontinuous in $W^{3,p}(I;\RR^3)$ at functions $\bfu\in \Xp$ such that 
$\{t\in \cluint: \bfu''(t) =0\}$ has measure zero for $p>1$.
\end{lemma}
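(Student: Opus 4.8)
The plan is to adapt the proof of the Modified Tonelli semicontinuity theorem proved above, the only genuinely new ingredient being an exhaustion of $\ouint$ by sets on which $|\bfu''|$ is bounded away from zero, which is available precisely because the inflection set is Lebesgue-null. Write $\cF(\bfu,\ouint)=\int_{\ouint}f_0(\bfu',\bfu'',\bfu''')\ds$, where $f_0(\bfx,\bfy,\bfz)=|\bfy|^2(1+[\bfx\cdot(\bfy\times\bfz)]^2/|\bfy|^6)^2$; on the open set $\{\bfy\neq\mathbf{0}\}$ the map $f_0$ is smooth, non-negative, and convex in its third argument, since, writing $L(\bfz)=\bfz\cdot(\bfx\times\bfy)$, one has $f_0=|\bfy|^2+2L(\bfz)^2/|\bfy|^4+L(\bfz)^4/|\bfy|^{10}$, a sum of convex functions of $\bfz$ (this is the computation already recorded for the regularized integrand of the previous subsection, to which $f_0$ reduces where $|\bfy|\geq\k_m$). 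Fix $\bfu\in\Xp$ with $|\{t\in\cluint:\bfu''(t)=\mathbf{0}\}|=0$ and a sequence $\{\bfu_j\}$ with $\bfu_j\wto\bfu$ in $W^{3,p}(\ouint;\RR^3)$, $p>1$. The compact embedding $W^{3,p}(\ouint;\RR^3)\hookrightarrow C^{2,1-1/p}(\cluint;\RR^3)$ then forces $\bfu_j\to\bfu$ in $C^2(\cluint;\RR^3)$, so $\bfu_j'\to\bfu'$ and $\bfu_j''\to\bfu''$ uniformly on $\cluint$ with $\{\bfu_j'\}$ and $\{\bfu_j''\}$ uniformly bounded, while $\bfu_j'''\wto\bfu'''$ in $L^p(\ouint;\RR^3)$ and $\{\bfu_j'''\}$ is equibounded in $L^1(\ouint;\RR^3)$.

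Next I would fix an arbitrary real number $t<\cF(\bfu,\ouint)$ and localize almost all of the energy on a good set. Since $\{\bfu''=\mathbf{0}\}$ is null, $\cF(\bfu,\ouint)=\int_{\{\bfu''\neq\mathbf{0}\}}f_0(\bfu',\bfu'',\bfu''')\ds$, and because $f_0\geq0$ the monotone convergence theorem gives both $\cF(\bfu,\ouint)=\sup_{c>0}\int_{\{|\bfu''|\geq c\}}f_0(\bfu',\bfu'',\bfu''')\ds$ and, for each fixed $c>0$, $\int_{\{|\bfu''|\geq c\}}f_0(\bfu',\bfu'',\bfu''')\ds=\sup_{m>0}\int_{\{|\bfu''|\geq c\}\cap\{|\bfu'''|\leq m\}}f_0(\bfu',\bfu'',\bfu''')\ds$. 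Hence there are $c>0$ and $m>0$ for which the measurable set $E:=\{|\bfu''|\geq c\}\cap\{|\bfu'''|\leq m\}$ obeys $\int_E f_0(\bfu',\bfu'',\bfu''')\ds>t$; on $E$ this integrand is bounded ($\bfu'$ and $\bfu''$ are bounded there, $|\bfu'''|\leq m$, and $|\bfu''|\geq c$), so the integral is a finite number exceeding $t$. By the uniform convergence $\bfu_j''\to\bfu''$ there is an index $j_0$ with $|\bfu_j''|\geq c/2$ on all of $\cluint$ whenever $j\geq j_0$; for such $j$, $f_0(\bfu_j'(\cdot),\bfu_j''(\cdot),\cdot)$ is defined and convex on $\RR^3$, and restricting the domain of integration yields $\cF(\bfu_j,\ouint)\geq\int_E f_0(\bfu_j',\bfu_j'',\bfu_j''')\ds$ (valid irrespective of how the Sadowsky integrand is interpreted on the possibly non-null set $\{\bfu_j''=\mathbf{0}\}$, since that set is discarded).

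With the good set in hand I would run the usual convexity estimate on $E$. For $j\geq j_0$, convexity of $f_0$ in its third argument gives, pointwise on $E$,
\begin{equation*}
f_0(\bfu_j',\bfu_j'',\bfu_j''')\geq f_0(\bfu_j',\bfu_j'',\bfu''')+(f_0)_{\bfz}(\bfu_j',\bfu_j'',\bfu''')\cdot(\bfu_j'''-\bfu'''),
\end{equation*}
and integrating over $E$ and adding and subtracting $(f_0)_{\bfz}(\bfu',\bfu'',\bfu''')$ in the last term bounds $\cF(\bfu_j,\ouint)$ from below by the sum of $\int_E f_0(\bfu_j',\bfu_j'',\bfu''')\ds$, of $\int_E[(f_0)_{\bfz}(\bfu_j',\bfu_j'',\bfu''')-(f_0)_{\bfz}(\bfu',\bfu'',\bfu''')]\cdot(\bfu_j'''-\bfu''')\ds$, and of $\int_E(f_0)_{\bfz}(\bfu',\bfu'',\bfu''')\cdot(\bfu_j'''-\bfu''')\ds$. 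On $E$ the arguments $(\bfu_j',\bfu_j'',\bfu''')$ and $(\bfu',\bfu'',\bfu''')$ stay in a single compact subset of $\{\bfy\neq\mathbf{0}\}$, on which $f_0$ and $(f_0)_{\bfz}$ are uniformly continuous, so $f_0(\bfu_j',\bfu_j'',\bfu''')\to f_0(\bfu',\bfu'',\bfu''')$ and $(f_0)_{\bfz}(\bfu_j',\bfu_j'',\bfu''')\to(f_0)_{\bfz}(\bfu',\bfu'',\bfu''')$ uniformly on $E$. Hence the first integral converges to $\int_E f_0(\bfu',\bfu'',\bfu''')\ds>t$; the second tends to zero, being the pairing of a sequence tending to $\mathbf{0}$ uniformly on $E$ with the $L^1(E;\RR^3)$-equibounded sequence $\{\bfu_j'''-\bfu'''\}$; and the third tends to zero because $(f_0)_{\bfz}(\bfu',\bfu'',\bfu''')\in L^{\infty}(E;\RR^3)$ while $\bfu_j'''\wto\bfu'''$ in $L^1(E;\RR^3)$. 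Therefore $\liminf_{j\to\infty}\cF(\bfu_j,\ouint)\geq t$, and since $t<\cF(\bfu,\ouint)$ was arbitrary the desired inequality follows.

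The step requiring care — and the main obstacle — is precisely the degeneracy of the Sadowsky integrand at inflection points, which blocks a direct appeal to the Modified Tonelli theorem. The hypothesis that the inflection set is null is used only to let the sets $\{|\bfu''|\geq c\}$ exhaust the energy of $\bfu$, and the hypothesis $p>1$ is used only to promote weak convergence in $W^{3,p}$ to uniform convergence of $\bfu_j''$, which keeps $\bfu_j''$ bounded away from zero on the good set and thereby confines every evaluation of $f_0$ to the region where it is smooth and convex. In contrast with the proof of the Modified Tonelli theorem, neither Egorov's theorem nor Lusin's theorem is needed, since the relevant uniform convergence is here automatic and $E$ need only be measurable.
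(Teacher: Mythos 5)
Your argument is correct in substance, but it takes a genuinely different route from the paper. The paper decomposes $\ouint\setminus\{\bfu''=\mathbf{0}\}$ into countably many open intervals $I_j$, shrinks each to a compactly contained subinterval $I_{j,k_j}$ on which $|\bfu''|$ has a positive infimum, invokes the already-established sequential weak lower semicontinuity (via the modified Tonelli theorem with the truncated integrand) on each such subinterval, and reassembles the estimate with Fatou's lemma for the counting measure, treating the finite- and infinite-energy cases separately. You instead work with a single measurable ``good set'' $E=\{|\bfu''|\geq c\}\cap\{|\bfu'''|\leq m\}$ obtained by two applications of monotone convergence, and rerun the convexity/linearization estimate directly on $E$. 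Your version buys several things: it avoids the countable bookkeeping and the interchange of $\liminf$ with an infinite sum; it dispenses with Egorov and Lusin, since the cutoff $|\bfu'''|\leq m$ together with the continuity of $\bfu'$ and $\bfu''$ already confines the arguments of $f_0$ and $(f_0)_{\bfz}$ to a compact subset of $\{\bfy\neq\mathbf{0}\}$; and the device of an arbitrary $t<\cF(\bfu,\ouint)$ handles the finite and infinite cases uniformly. The paper's version, by contrast, reuses its Tonelli-type theorem as a black box rather than repeating the convexity computation.

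One slip should be corrected: uniform convergence $\bfu_j''\to\bfu''$ gives $|\bfu_j''|\geq c/2$ only on $E$ (where $|\bfu''|\geq c$), not ``on all of $\cluint$'' as you write --- indeed $\bfu$ is allowed to have inflection points, so the stronger claim is generally false. Fortunately everything downstream (the identification of the Sadowsky integrand of $\bfu_j$ with $f_0(\bfu_j',\bfu_j'',\bfu_j''')$, the convexity inequality, and the uniform-continuity estimates) is only ever used on $E$, where the bound does hold, so the proof survives with that phrase amended.
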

\begin{proof}
Given an element $\bfu\in \Xp$ for $p>1$, define $Z=\{s\in \cluint: \bfu''(s) =0\}$ and let $\{I_j\}_{j\in \cJ}$ be the family of nonoverlapping open intervals $I_j=(a_j,b_j)$ on which $\bfu''(s)\neq 0$. 
Since there are at most countably many such intervals, take $\cJ \subset \NN$.
Since $Z$ has measure zero, it is clear that 
\begin{equation}
\cF(\bfw,I)=\cF\bigg(\bfw,\bigcup_{j\in \cJ} I_j\bigg),
\end{equation} 
for any $\bfw\in \Xp$.

Let $\{\bfu_n\}$ be a sequence in $\Xp$ such that $\bfu_n\wto \bfu$ weakly in $W^{3,p}(\ouint;\RR^3)$, and let $j\in \cJ$ be given.
Choose $K_j\in \NN$ so large that $K_j>2/(b_j-a_j)$. 
For $k\geq K_j$, then define the intervals $I_{j,k}$ by
\begin{equation}
I_{j,k}=\left(a_j+\frac{1}{k},b_j-\frac{1}{k}\right),
\end{equation}
with the remainder denoted by $R_{j,k}=I_j\setminus I_{j,k}$.
By the continuity of $\bfu''$, the curvature $|\bfu''|$ is bounded below on each interval $I_{j,k}$  such that
\begin{equation}
m_{j,k}=\inf_{t\in I_{j,k}} |\bfu''(t)| >~0.
\end{equation}
By the uniform convergence $\bfu_n''\to \bfu''$ on $\cluint$, granted that $N_j$ is sufficiently large it can be concluded that $|\bfu_n''(t)|>\frac{m_{j,k}}{2}$ for all $t\in I_{j,k}$ and $n\geq N_j$.
 For any $\eps>0$ it is feasible to choose $K_j>0$ such that if $k_j>K_j$, then
$\cF(\bfu, I_{j,k_j}) >\cF(\bfu, I_j)-\eps 2^{-j}$ if $\cF(\bfu,I_j)$ is finite and $\cF(\bfu, I_{j,k_j}) >1/\eps$ otherwise.

Suppose that $\cF(\bfu,I_{j})$ is finite for all $j\in \cJ$. 
By Fatou's lemma applied to the counting measure, the countable additivity of $\cF(\bfw;\cdot)$ as a set function, and the positivity of $\cF(\cdot,A)$ for any measurable set $A\subset I$
\begin{align*}
\liminf_{n\to \infty} \cF(\bfu_n,\ouint) = {} & \liminf_{n\to \infty} \sum_{j\in \cJ} \cF(\bfu_n,I_j) \\
 \geq {} & \sum_{j\in \cJ} \liminf_{n\to \infty} \cF(\bfu_n,I_{j,k_j}).
\end{align*}
Since $\cF(\cdot, I_{j,k_j})$ is sequentially weakly lower semicontinuous at functions such that $|\bfu''|\geq m$ for some $m>0$, it follows that
\begin{align*}
\liminf_{n\to \infty} \cF(\bfu_n,\ouint) \geq {} &  \sum_{j\in \cJ} \cF(\bfu,I_{j,k_j})\\
= {} &  \cF\bigg(\bfu,\bigcup_{j\in \cJ} I_{j}\bigg)-\eps,
\end{align*}
wherein countable additivity of $\cF(\bfu, \cdot)$ as a set function is used.

If there is at least one interval $I_{J}$ for which $\cF(\bfu,I_J)$ is infinite, then $\cF(\bfu,I)$ is infinite. 
Moreover, it follows that
\begin{align*}
\begin{split}
	\liminf_{n\to \infty} \cF (\bfu_n,I) \geq {} & \liminf_{n\to \infty} \cF (\bfu_n,I_{J,k_J})\\
	\geq {} & \frac{1}{\eps}.
\end{split}	
\end{align*}

Since $\eps>0$ is arbitrary, the conclusion holds.  
\end{proof}

\section{Discussion}
Some final remarks are in order. 
It has been shown that the $\Gamma$-convergence in a space $X^{+,p}$ of ribbons with centerlines having nonvanishing curvature with respect 
to weak and strong convergence of the centerlines in $W^{3,p}(I;\RR^3)$.
The elastic energy $\cF_{\eps}$ of such ribbons may be written as a single integral along their centerline 
depending parametrically on the aspect ratio $\eps$ of the ribbon.
For a fixed centerline $\bfu$, the elastic energy $\cF_{\eps}(\bfu,I)$ is monotonically increasing in the aspect ratio, and the $\Gamma$-limit result follows upon showing that the lower semicontinuous envelope of the point-wise limit $\tilde{\cF}$ is the Sadowsky functional in $X^{+,p}$ for $p\geq 4$ with respect to weak convergence in 
$W^{3,p}(I;\RR^3)$.

It has also been shown that the Sadowsky functional is 
sequentially weakly lower semicontinuous at functions $\bfu$ in $\Xp$ such that the set of inflection points has measure zero. 
This is a necessary but not sufficient condition for the Sadowsky functional to be the $\Gamma$-limit in a space containing such functions since $\Gamma$-limits are lower semicontinuous. 
To establish that $\tcF$ the $\Gamma$-limit of $\cF_{\eps}$ with respect to the weak (strong) topology on $W^{3,p}(I;\RR^3)$, 
it remains to construct a sequence of functions $\{\bfu_n\}$ in $\Xp$ with $\eta_n$ satisfying $\eta'_n\in L^{\infty}(\ouint)$, 
$\bfu_n\to \bfu$ in weakly (strongly, respectively) $W^{3,p}(\ouint;\RR^3)$, and $\lim_{n\to \infty} \cF(\bfu_n,I)=\cF(\bfu,I)$.
Such a construction would firmly establish Sadowsky's functional as the variational limit in a space including centerlines of developables that form M\"{o}bius bands---the context in which that functional was originally derived.

Finally, from the perspective of direct methods in the calculus of variations, it is of interest to prove the following natural conjecture.
\begin{conjecture}
For each $\km>0$ and $p\geq 4$, smooth functions are strongly dense in the space
\begin{equation}
\Xikp = \{ \bfu \in W^{3,p} (I;\RR^3) : D^{[1]}\bfu(0)=\bfA, D^{[1]}\bfu(1)=\bfB, |\bfu'(s)|=1 \text{ and } |\bfu''(s)|\geq \km \text{ for a.e. } s\in I\}.
\end{equation}
In particular, $\Xkp=\Xikp$, and consequently $\Xkp$ is sequentially weakly closed.
\end{conjecture}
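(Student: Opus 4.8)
\emph{Strategy.} The plan is to transfer the statement to the tangent indicatrix $\bft=\bfu'$, to build the smooth approximants by separating a scalar reparametrization ``speed'' (which can be mollified while keeping it $\ge\km$) from the shape of a unit-speed spherical curve (reconstructed from its geodesic curvature through a linear ODE that is exactly unit speed), and to finish with a finite-dimensional correction enforcing the closure condition $\int_I\bft=\bfb-\bfa$ hidden in the boundary data; here $\bfA=(\bfa,\bft_0)$ and $\bfB=(\bfb,\bft_1)$. Since $p\ge 4$, the compact embedding $W^{3,p}(I;\RR^3)\hookrightarrow C^{2,1-1/p}(\cluint;\RR^3)$ shows that for $\bfu\in\Xikp$ the map $\bft:=\bfu'$ lies in $W^{2,p}(I;\bbS^2)$ with $\bft'$ continuous, so that $|\bfu''|=|\bft'|\ge\km$ holds at \emph{every} point of $\cluint$; and conversely, if $\bft_n\to\bft$ in $W^{2,p}$ with each $\bft_n$ smooth, $\bbS^2$-valued, $|\bft_n'|\ge\km$ on $\cluint$, $\bft_n(0)=\bft_0$, $\bft_n(1)=\bft_1$, and $\int_I\bft_n=\bfb-\bfa$, then $\bfu_n(s):=\bfa+\int_0^s\bft_n$ lies in $\Yk$ and $\bfu_n\to\bfu$ in $W^{3,p}$. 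It therefore suffices to produce such a sequence $\{\bft_n\}$.

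\emph{Construction of $\bft_n$.} Put $\sigma(s)=\int_0^s|\bft'|$, $L=\int_I|\bft'|\ge\km$, and $\gamma:=\bft\circ\sigma^{-1}\in W^{2,p}((0,L);\bbS^2)$, a unit-speed spherical curve with geodesic curvature $\k_g=\gamma''\cdot(\gamma\times\gamma')\in L^p$. Mollify $\k_g$ to smooth $\k_g^{(n)}\to\k_g$ in $L^p$ and integrate the linear moving-frame system $\gamma'=T$, $T'=-\gamma+\k_g^{(n)}N$, $N'=-\k_g^{(n)}T$ from the frame of $\gamma$ at $0$; this gives smooth curves $\gamma_n$ on $[0,L]$ which are automatically unit speed (the frame stays orthonormal), satisfy $\gamma_n(0)=\bft_0$, and, by a Gr\"onwall argument, converge to $\gamma$ in $W^{2,p}$. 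A small controllability-type perturbation of $\k_g^{(n)}$ on a fixed subinterval near $\sigma=L$ steers the terminal frame of $\gamma_n$ onto that of $\gamma$, in particular $\gamma_n(L)$ onto $\gamma(L)=\bft_1$, the perturbation being $o(1)$ because $\gamma_n\to\gamma$. Independently, mollify the \emph{scalar} $|\bft'|\ge\km$ to obtain smooth $v_n\ge\km$ with $v_n\to|\bft'|$ in $W^{1,p}$ (convolution preserves the bound $v_n\ge\km$) and, when $|\bft'|\not\equiv\km$, restore $\int_I v_n=L$ by a smooth bump of vanishing amplitude supported on a set where $v_n\ge\km+c$ for some fixed $c>0$; no correction is needed when $|\bft'|\equiv\km$, since then $v_n\equiv\km$. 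With $\sigma_n(s)=\int_0^s v_n$ and $\bft_n:=\gamma_n\circ\sigma_n$, the curves $\bft_n$ are smooth and $\bbS^2$-valued, $|\bft_n'|=v_n\ge\km$ on $\cluint$, $\bft_n(0)=\bft_0$, $\bft_n(1)=\gamma_n(L)=\bft_1$, and $\bft_n\to\bft$ in $W^{2,p}$ (the chain-rule terms $(\gamma_n''\circ\sigma_n)v_n^2$ and $(\gamma_n'\circ\sigma_n)v_n'$ converge in $L^p$ because $\gamma_n''\to\gamma''$ in $L^p$, the $\sigma_n$ are uniformly bi-Lipschitz, and $v_n\to|\bft'|$ in $W^{1,p}$).

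\emph{Correction and conclusion.} It remains to arrange $\int_I\bft_n=\bfb-\bfa$; write $\mathbf{e}_n:=\bfb-\bfa-\int_I\bft_n\to\mathbf0$. Perturb the geodesic curvature, $\k_g^{(n)}\mapsto\k_g^{(n)}+h$ with $h$ smooth, small, compactly supported in the interior, and subject to the finitely many linear conditions that keep $\gamma_n(0)$ and $\gamma_n(L)$---hence $\bft_n(0)$ and $\bft_n(1)$---unchanged. This alters $\bft_n$ (which stays smooth and $\bbS^2$-valued, still with $|\bft_n'|=v_n\ge\km$, since the speed $v_n$ is left untouched) and alters $\int_I\bft_n$ by a quantity linear in $h$ via the variational equation of the frame system; one verifies that the resulting linear map $h\mapsto\delta\!\int_I\bft_n$ is onto a fixed ball in $\RR^3$---and, when the image of $\bft$ lies in a great circle, onto the plane of that circle, which contains $\mathbf{e}_n$---so the implicit-function theorem supplies an admissible $h$ with $\|h\|\to0$, and smallness preserves $\gamma_n\to\gamma$. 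With this correction, $\bfu_n(s)=\bfa+\int_0^s\bft_n\in\Yk$ and $\bfu_n\to\bfu$ in $W^{3,p}$, so $\Xikp\subseteq\Xkp$. Conversely, weak convergence in $W^{3,p}(I;\RR^3)$ forces $C^2$-convergence by the compact embedding, under which the pointwise constraints $|\bfu'|=1$, $|\bfu''|\ge\km$ and the boundary data pass to the limit; hence $\Xikp$ is sequentially weakly closed, which gives $\Xkp\subseteq\Xikp$, the equality $\Xkp=\Xikp$, and the weak closedness of $\Xkp$.

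\emph{Principal obstacle.} The difficulty is the nonconvex constraint $|\bfu_n''|\ge\km$: straightforward mollification degrades the curvature lower bound by an amount that is only $o(1)$, and because an arclength parametrization admits no reparametrization other than a translation, the length constraint $\int_I|\bfu_n'|=1$ is rigid and forbids rescaling to buy back the deficit. This forces the factorization above---a scalar speed $v_n$, mollified as a scalar and hence $\ge\km$, composed after a unit-speed shape $\gamma_n$ reconstructed from its geodesic curvature---and forces the integral correction to act on the shape rather than on $\bfu_n$ directly. The genuinely substantive work, and the point most at risk in a full proof, is the spanning/controllability claim for the correction map $h\mapsto\delta\!\int_I\bft_n$ uniformly in $n$, together with the degenerate configurations ($\bfu$ planar, or of constant curvature $\km$) in which the osculating structure is special and the argument must be carried out in the relevant lower-dimensional subspace.
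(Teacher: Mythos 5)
The statement you have attempted is stated in the paper as a \emph{conjecture}: the authors explicitly write that ``this direction is not pursued here,'' so there is no proof in the paper to compare against. Your proposal is therefore an attack on an open problem, and its central structural idea is a good one: since mollifying $\bfu$ directly destroys the nonconvex constraint $|\bfu''|\geq\km$ while the unit-speed constraint forbids rescaling, you factor the tangent indicatrix as a scalar speed (mollified as a scalar, hence still $\geq\km$) composed with a unit-speed spherical curve reconstructed exactly from a mollified geodesic curvature via the Darboux frame ODE. That reduction, together with the observation that $\Xkp\subseteq\Xikp$ is immediate from the compact embedding $W^{3,p}\hookrightarrow C^{2,1-1/p}$ and that weak closedness of $\Xikp$ is likewise immediate, correctly isolates the entire difficulty in the density direction $\Xikp\subseteq\Xkp$.

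However, the proposal is not a proof, and the gap sits exactly where you place it: the finite-dimensional correction. The surjectivity of $h\mapsto\delta\!\int_I\bft_n$ (restricted to perturbations preserving $\gamma_n(0)$ and $\gamma_n(L)$, and uniformly in $n$ so that the implicit-function argument closes) is asserted rather than verified. Worse, your stated treatment of the degenerate case is incorrect. If $\bft$ lies in a great circle with plane $P$, then $\gamma$ is a geodesic arc, $\k_g\equiv 0$, and $N$ is a constant vector normal to $P$; the variational equation with zero initial data gives $\delta\gamma(t)=N\int_0^t\sin(t-r)\,h(r)\dr$, so the first variation of $\int_I\bft_n$ with respect to $h$ lies in the \emph{normal line} $P^{\perp}$, not in $P$. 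Since in that case $\mathbf{e}_n\in P$ (as $\gamma_n=\gamma$ and only the speed is mollified), the proposed correction mechanism cannot reach $\mathbf{e}_n$ at first order; the planar case must instead be handled by perturbing the speed $v_n$ within the constraints $v_n\geq\km$ and $\int_I v_n=L$, or by a second-order argument, neither of which you supply. A complete proof would also need to address the interaction between the endpoint-steering perturbation near $\sigma=L$ and the integral correction, and the case $|\bft'|\equiv\km$ combined with planarity, where both correction mechanisms are simultaneously constrained. In short: a promising and well-motivated strategy for the paper's open conjecture, with the decisive controllability step still missing and one of its claimed degenerate-case resolutions demonstrably wrong as stated.
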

Although this direction is not pursued here, it is also of interest to show that there exist a minima of the functionals $\cF_{\eps}(\cdot , I)$ and $\cF(\cdot, I)$ 
on the set $\Xkp$ for each $\km>0$, $\eps>0$, and $p\geq 4$, 
and that the minima of $\cF_{\eps}(\cdot, I)$ converge to the minimum of $\cF(\cdot,I)$. 
Establishing the above conjecture is likely to be an important step toward esblishing the existence of minimizers in $\Xkp$.


\bibliographystyle{alpha}      
\bibliography{Gamma_bib}   


\end{document}